\newtheorem{theorem}{Theorem}[section]
\newtheorem{corollary}[theorem] {Corollary}
\newtheorem{definition}[theorem]{Definition}
\newtheorem{example}[theorem]{Example}
\newtheorem{lemma} [theorem]{Lemma}
\newtheorem{proposition}[theorem]{Proposition}
\newtheorem{remark}[theorem]{Remark}
\newtheorem{question}[theorem]{Question}
\title{This is the title}
\begin{document}

\begin{center}
{\bf{IMPROVING CASAZZA-KALTON-CHRISTENSEN-VAN EIJNDHOVEN PERTURBATION WITH APPLICATIONS}}\\
K. MAHESH KRISHNA  \\
Department of Humanities and Basic  Sciences \\
Aditya College of Engineering and Technology \\
Surampalem, East-Godavari\\
Andhra Pradesh 533 437 India\\
Email: kmaheshak@gmail.com\\
 \today
\end{center}

\hrule
\vspace{0.5cm}

\textbf{Abstract}: Let $ \mathcal{X}$, $ \mathcal{Y}$ be  Banach spaces and $S:\mathcal{X} \to  \mathcal{Y} $ be an invertible Lipschitz map. Let     $ T : \mathcal{X}\rightarrow \mathcal{Y}$ be a map and there exist  $ \lambda_1,\lambda_2 \in \left [0, 1  \right )$ such that 
\begin{align*}
	\|Tx-Ty-(Sx-Sy)\|\leq\lambda_1\|Sx-Sy\|+\lambda_2\|Tx-Ty\|,\quad \forall x,y \in  \mathcal{X}.
\end{align*} 
Then we prove that $T$ is an invertible Lipschitz map. This improves 25 years  old Casazza-Kalton-Christensen-van Eijndhoven  perturbation. It also improves  28 years old Soderlind-Campanato perturbation and 2 years old Barbagallo-Ernst-Thera perturbation. We  give applications to the theory of    metric frames. The notion of Lipschitz atomic decomposition for Banach spaces is also introduced.

\textbf{Keywords}: Paley-Wiener perturbation, Lipschitz map, metric frame, atomic decomposition.

\textbf{Mathematics Subject Classification (2020)}: 26A16, 47A55, 42C15.


\section{Introduction}
Let $ \mathcal{X}$ be a Banach space and $I_\mathcal{X}$ be the identity operator on  $\mathcal{X}$. Carl Neumann's classical result says that if $ T : \mathcal{X}\rightarrow \mathcal{X}$ is a bounded linear operator such that $\|T-I_\mathcal{X}\|<1$, then $T$ is invertible \cite{CARLNEUMANN}. Following two results are consequences of this result. They are known as \textbf{Paley-Wiener theorems}.
\begin{enumerate}
	\item Sequences  close to orthonormal bases in Hilbert spaces are Riesz bases \cite{YOUNG, PALEYWIENER}.
	\item Sequences close to Schauder bases in Banach spaces are Schauder bases \cite{BOAS, SHAFKE}.
\end{enumerate}
History of Paley-Wiener theorems are nicely presented in \cite{ARSOVE, RETHERFORD}. It was in the setting of Hilbert spaces, Paley-Wiener theorem was first generalized by Pollard \cite{POLLARD}, second generalized  by Sz. Nagy \cite{NAGY} and third generalized by Hilding \cite{HILDING}. Hilding proved the following theorem.
\begin{theorem}\cite{HILDING}\label{HILDINGTHEOREM}
(\textbf{Hilding perturbation}) Let $\mathcal{H}$ be a Hilbert space. If  a  linear  operator $ T : \mathcal{H}\rightarrow \mathcal{H}$ is  such that there exists  $  \lambda \in \left [0, 1  \right )$ with 
\begin{align*} 
\|Th-h\|\leq\lambda\|Th\|+\lambda\|h\|,\quad \forall h \in  \mathcal{H},
\end{align*} 
then $ T $ is bounded,   invertible and 

	\begin{align*} &\frac{1-\lambda}{1+\lambda}\|h\|\leq\|Th\|\leq\frac{1+\lambda}{1-\lambda} \|h\|, \quad\forall h \in  \mathcal{H};\\
& \frac{1-\lambda}{1+\lambda}\|h\|\leq\|T^{-1}h\|\leq\frac{1+\lambda}{1-\lambda} \|h\|, \quad\forall h \in  \mathcal{H}.
\end{align*} 	
\end{theorem}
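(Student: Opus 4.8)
The plan is to split the statement into the two-sided norm estimates, which are elementary, and the invertibility, which is the real content: I would dispatch boundedness and injectivity first, and leave surjectivity for last via a continuity argument.

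First I would establish the two displayed inequalities for $T$ itself. Starting from the hypothesis and the reverse triangle inequality, $\bigl|\,\|h\|-\|Th\|\,\bigr|\le\|Th-h\|\le\lambda\|Th\|+\lambda\|h\|$. Reading this with the sign $\|h\|-\|Th\|$ gives $(1-\lambda)\|h\|\le(1+\lambda)\|Th\|$, i.e.\ the lower bound $\|Th\|\ge\frac{1-\lambda}{1+\lambda}\|h\|$; reading it with the opposite sign gives $(1-\lambda)\|Th\|\le(1+\lambda)\|h\|$, i.e.\ the upper bound. In particular $T$ is bounded and bounded below, hence injective with closed range, and $\|T-I\|\le\frac{2\lambda}{1-\lambda}=:M$. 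I would emphasise that these estimates use nothing beyond the hypothesis, so they apply verbatim to any operator satisfying a Hilding inequality with the same constant $\lambda$.

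The hard part is surjectivity: bounded below is not enough in infinite dimensions (an isometry onto a proper subspace is bounded below but not onto), and a crude Neumann series on $I-T$ converges only when $M<1$, i.e.\ $\lambda<\tfrac13$. To reach all $\lambda\in[0,1)$ I would use the method of continuity along the segment $T_s:=(1-s)I+sT$, $s\in[0,1]$. The key lemma is that every $T_s$ again satisfies the Hilding inequality with the \emph{same} $\lambda$: since $T_sh-h=s(Th-h)$ and $sTh=T_sh-(1-s)h$, one gets $\|T_sh-h\|=s\|Th-h\|\le\lambda\,s\|Th\|+s\lambda\|h\|\le\lambda\|T_sh\|+\lambda(1-s)\|h\|+s\lambda\|h\|=\lambda\|T_sh\|+\lambda\|h\|$. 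By the first paragraph each $T_s$ is therefore bounded below by the \emph{same} constant $c:=\frac{1-\lambda}{1+\lambda}$, so whenever $T_s$ is onto one automatically has $\|T_s^{-1}\|\le 1/c$.

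With this uniform a priori bound in hand I would run the standard clopen argument. Let $\Omega=\{s\in[0,1]:T_s\text{ is bijective}\}$; it is nonempty because $T_0=I$. If $s_0\in\Omega$, then $T_s=T_{s_0}+(s-s_0)(T-I)$ with $\|T_s-T_{s_0}\|\le|s-s_0|M$, so the Neumann perturbation lemma makes $T_s$ bijective as soon as $|s-s_0|<c/M=:\delta$; since $\delta$ does not depend on $s_0$, this shows $\Omega$ is both open and closed in $[0,1]$. Connectedness of $[0,1]$ forces $\Omega=[0,1]$, so $T=T_1$ is invertible. Finally, substituting $h\mapsto T^{-1}h$ into the two-sided estimate for $T$ yields the stated bounds for $T^{-1}$. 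The one genuine obstacle is the passage from bounded below to onto for $\lambda\ge\tfrac1{\sqrt2}$, where every single-vector Hilbert-space estimate stalls; the continuity method resolves it precisely because the inequality is inherited along the homotopy with an unchanged constant, and I note that this argument never uses the inner product, which is exactly what makes the Banach-space generalization possible.
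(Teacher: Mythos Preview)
Your argument is correct. The paper does not give a standalone proof of Hilding's theorem (it is quoted from \cite{HILDING}), but its own machinery specialises to it via Theorem~\ref{OIDENTITYTHEOREM} with $\lambda_1=\lambda_2=\lambda$, and the route taken there is genuinely different from yours. For surjectivity the paper first shows $\|Th-\alpha h\|\ge\frac{1-\lambda}{1+\lambda}\|h\|$ for every $\alpha\le 0$, and then uses a resolvent-identity argument (Lemma~\ref{FIRSTLEMMA} and Theorem~\ref{UNIONTHEOREM}) inside the Banach algebra $\operatorname{Lip}_0(\mathcal{X})$ to deduce $(-\infty,\tfrac{1-\lambda}{1+\lambda})\subseteq\rho(T)$, whence $0\in\rho(T)$. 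You instead run the method of continuity along the convex segment $T_s=(1-s)I+sT$, the key observation being that the Hilding inequality passes to each $T_s$ with the \emph{same} constant $\lambda$, so the uniform lower bound $c=\tfrac{1-\lambda}{1+\lambda}$ feeds a clopen/Neumann argument on $[0,1]$. Both are continuation arguments; yours is more elementary (no spectral or Banach-algebra apparatus) and is essentially the Casazza--Kalton strategy the paper alludes to in its closing remark, while the paper's resolvent approach is what lets the authors work uniformly in the nonlinear Lipschitz category.
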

It took around 50 years to improve Theorem \ref{HILDINGTHEOREM} to the most generality for Banach spaces. 
 \begin{theorem}\cite{CASAZZAKALTON, VANEIJNDHOVEN, CASAZZACHRISTENSEN}\label{CASAZZAKALTONTHEOREM}
 (\textbf{Casazza-Kalton-Christensen-van Eijndhoven perturbation})	Let $ \mathcal{X}, \mathcal{Y}$ be Banach spaces and $ S : \mathcal{X}\rightarrow \mathcal{Y}$ be a bounded invertible operator. If  a  linear  operator $ T : \mathcal{X}\rightarrow \mathcal{Y}$ is  such that there exist  $  \lambda_1,\lambda_2 \in \left [0, 1  \right )$ with 
 \begin{align*} 
 	 \|Tx-Sx\|\leq\lambda_1\|Sx\|+\lambda_2\|Tx\|,\quad \forall x \in  \mathcal{X},
 \end{align*} 
 	then $ T $ is bounded,   invertible and 
 	\begin{align*} &\frac{1-\lambda_1}{1+\lambda_2}\|Sx\|\leq\|Tx\|\leq\frac{1+\lambda_1}{1-\lambda_2} \|Sx\|, \quad\forall x \in  \mathcal{X};\\
 	& \frac{1-\lambda_2}{1+\lambda_1}\frac{1}{\|S\|}\|y\|\leq\|T^{-1}y\|\leq\frac{1+\lambda_2}{1-\lambda_1} \|S^{-1}\|\|y\|, \quad\forall y \in  \mathcal{Y}.
 	\end{align*} 
 \end{theorem}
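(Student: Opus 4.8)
The plan is to reduce the statement to elementary norm estimates, then isolate the one genuinely hard point -- surjectivity -- and settle it by a continuity argument. First I would extract the two-sided bound directly from the hypothesis using only the triangle inequality: from $\|Tx\|\le\|Tx-Sx\|+\|Sx\|$ and the assumed inequality one gets $(1-\lambda_2)\|Tx\|\le(1+\lambda_1)\|Sx\|$, while $\|Sx\|\le\|Tx-Sx\|+\|Tx\|$ gives $(1-\lambda_1)\|Sx\|\le(1+\lambda_2)\|Tx\|$. Since $\lambda_1,\lambda_2<1$, these rearrange to $\frac{1-\lambda_1}{1+\lambda_2}\|Sx\|\le\|Tx\|\le\frac{1+\lambda_1}{1-\lambda_2}\|Sx\|$. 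Boundedness of $T$ then follows from boundedness of $S$, and the lower bound together with injectivity of $S$ shows $T$ is injective with closed range.

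Next I would normalize away $S$. Since $S$ is invertible, set $A:=TS^{-1}\colon\mathcal Y\to\mathcal Y$ and substitute $y=Sx$ into the hypothesis; this yields $\|Ay-y\|\le\lambda_1\|y\|+\lambda_2\|Ay\|$ for every $y\in\mathcal Y$. Thus it suffices to prove that any bounded operator $A$ on a Banach space satisfying this ``$S=I$'' inequality is invertible, for then $T=AS$ is a composition of invertible maps.

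The hard part is surjectivity of $A$. The naive idea -- write $A=I-(I-A)$ and invert by a Neumann series -- fails, because the best one extracts is $\|Ay-y\|\le\frac{\lambda_1+\lambda_2}{1-\lambda_2}\|y\|$, and this constant need not be $<1$ (e.g.\ when $\lambda_1+2\lambda_2\ge1$). To circumvent this I would use the method of continuity. Consider the segment $U_t:=(1-t)I+tA$ for $t\in[0,1]$, joining $U_0=I$ to $U_1=A$. The key computation is that each $U_t$ again obeys an inequality of the same shape: substituting $Ay=\tfrac1tU_ty-\tfrac{1-t}{t}y$ into $\|U_ty-y\|=t\|Ay-y\|$ gives $\|U_ty-y\|\le\bigl(t\lambda_1+(1-t)\lambda_2\bigr)\|y\|+\lambda_2\|U_ty\|$, and since $t\lambda_1+(1-t)\lambda_2\le\max(\lambda_1,\lambda_2)<1$, the estimate of the first step applies uniformly: $\|U_ty\|\ge c\|y\|$ with $c:=\frac{1-\max(\lambda_1,\lambda_2)}{1+\lambda_2}>0$ independent of $t$. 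Hence the set $G:=\{t\in[0,1]:U_t\text{ is invertible}\}$ is nonempty ($0\in G$), open (invertibles form an open set and $t\mapsto U_t$ is norm-continuous), and closed: if $t_n\to t$ with $t_n\in G$ then $\|U_{t_n}^{-1}\|\le c^{-1}$, so once $\|U_t-U_{t_n}\|<c$ a Neumann perturbation of $U_{t_n}$ shows $U_t$ invertible. By connectedness of $[0,1]$, $G=[0,1]$, so $A=U_1$ is invertible.

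Finally I would assemble the conclusion. With $A$ invertible, $T=AS$ is invertible with $T^{-1}=S^{-1}A^{-1}$. The bounds on $A^{-1}$ coming from $\frac{1-\lambda_1}{1+\lambda_2}\|y\|\le\|Ay\|\le\frac{1+\lambda_1}{1-\lambda_2}\|y\|$, combined with $\|S^{-1}w\|\le\|S^{-1}\|\,\|w\|$ and $\|S^{-1}w\|\ge\|S\|^{-1}\|w\|$, yield exactly the stated two-sided estimate on $\|T^{-1}y\|$. I expect the only real obstacle to be the surjectivity step; everything else is triangle-inequality bookkeeping, and the observation that the perturbation inequality is preserved along the homotopy $U_t$ is precisely what makes the continuity method go through.
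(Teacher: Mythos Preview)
Your proof is correct. The reduction to $S=I$ via $A=TS^{-1}$, the triangle-inequality bounds, and the method-of-continuity argument for surjectivity all go through as written; the key observation that $U_t$ inherits a perturbation inequality with first constant $t\lambda_1+(1-t)\lambda_2\le\max(\lambda_1,\lambda_2)$ is exactly what makes the uniform lower bound work.

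The paper's route to surjectivity is different in packaging though close in spirit. After the same reduction, the paper shows directly that $\|Ay-\alpha y\|\ge\frac{1-\lambda_1}{1+\lambda_2}\|y\|$ for every $\alpha\le 0$, and then argues spectrally: if $\lambda_0$ were the leftmost real spectral value of $A$ with $\lambda_0\le 0$, one approaches it by $\alpha_n=\lambda_0-\tfrac1n\in\rho(A)$, uses the resolvent identity together with the uniform bound on $\|(A-\alpha_n I)^{-1}\|$ to show that $(A-\alpha_n I)^{-1}y$ is Cauchy for each $y$, and concludes that $A-\lambda_0 I$ is onto, a contradiction. Since your family $U_t=(1-t)I+tA$ is, after rescaling by $1/t$, exactly $A-\alpha I$ with $\alpha=-(1-t)/t$, both arguments traverse the same one-parameter path from the identity to $A$. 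The paper's version yields the extra information $(-\infty,\tfrac{1-\lambda_1}{1+\lambda_2})\subset\rho(A)$ and is phrased so as to carry over verbatim to the nonlinear Lipschitz setting (its actual goal), whereas your open--closed--connected argument is shorter, stays entirely within elementary operator theory, and avoids any appeal to spectra or the resolvent identity.
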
 
  There is an improvement of Theorem \ref{CASAZZAKALTONTHEOREM} which is due to Guo with an extra assumption that $T$ is bounded.
  \begin{theorem}\cite{GUO}\label{GUOTHEOREM}
 	Let $ \mathcal{X}, \mathcal{Y}$ be Banach spaces and $ S : \mathcal{X}\rightarrow \mathcal{Y}$ be a bounded invertible operator. If  a bounded  linear  operator $ T : \mathcal{X}\rightarrow \mathcal{Y}$ is  such that there exist  $  \lambda_1 \in \left [0, 1  \right )$ and $  \lambda_2 \in \left [0, 1  \right ]$ with 
 \begin{align*} 
 \|Tx-Sx\|\leq\lambda_1\|Sx\|+\lambda_2\|Tx\|,\quad \forall x \in  \mathcal{X},
 \end{align*} 
 then $ T $ is   invertible. Further,   for every $\varepsilon>0$ satisfying $1>\lambda_2-\varepsilon>0$ and $\lambda_1+\varepsilon \|TS^{-1}\|<1$, we have 
 \begin{align*} &\frac{1-\lambda_1-\varepsilon \|TS^{-1}\|}{1+\lambda_2-\varepsilon}\|Sx\|\leq\|Tx\|\leq\frac{1+\lambda_1+\varepsilon \|TS^{-1}\|}{1-\lambda_2+\varepsilon} \|Sx\|, \quad\forall x \in  \mathcal{X};\\
 & \frac{1-\lambda_2+\varepsilon}{1+\lambda_1+\varepsilon \|TS^{-1}\|}\frac{1}{\|S\|}\|y\|\leq\|T^{-1}y\|\leq\frac{1+\lambda_2-\varepsilon}{1-\lambda_1-\varepsilon \|TS^{-1}\|} \|S^{-1}\|\|y\|, \quad\forall y \in  \mathcal{Y}.
 \end{align*}  	
  \end{theorem}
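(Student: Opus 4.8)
The plan is to deduce Theorem \ref{GUOTHEOREM} from the already-established Casazza-Kalton-Christensen-van Eijndhoven perturbation (Theorem \ref{CASAZZAKALTONTHEOREM}) by absorbing the boundary value $\lambda_2=1$ into a strictly subunital constant, at the cost of slightly enlarging $\lambda_1$. The only feature of Theorem \ref{GUOTHEOREM} not covered by Theorem \ref{CASAZZAKALTONTHEOREM} is that $\lambda_2$ may equal $1$; the price of this relaxation is the extra hypothesis that $T$ be bounded, and it is precisely that hypothesis which makes the reduction work.

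First I would record the elementary consequence of boundedness. Since $S$ is bounded and invertible, $S^{-1}$ is bounded, so $TS^{-1}:\mathcal{Y}\to\mathcal{Y}$ is a bounded operator, and writing $Tx=(TS^{-1})(Sx)$ gives
\begin{align*}
\|Tx\|\le \|TS^{-1}\|\,\|Sx\|,\quad \forall x\in\mathcal{X}.
\end{align*}
This single inequality is the engine of the whole argument.

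Next, fix any $\varepsilon>0$ with $1>\lambda_2-\varepsilon>0$ and $\lambda_1+\varepsilon\|TS^{-1}\|<1$, and set $\lambda_1':=\lambda_1+\varepsilon\|TS^{-1}\|$ and $\lambda_2':=\lambda_2-\varepsilon$; by the choice of $\varepsilon$ both lie in $[0,1)$. The key computation is the trade-off identity
\begin{align*}
\lambda_1\|Sx\|+\lambda_2\|Tx\|=\lambda_1'\|Sx\|+\lambda_2'\|Tx\|+\varepsilon\big(\|Tx\|-\|TS^{-1}\|\,\|Sx\|\big)\le \lambda_1'\|Sx\|+\lambda_2'\|Tx\|,
\end{align*}
where the final inequality uses the displayed bound on $\|Tx\|$ to make the parenthesised term nonpositive. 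Combining this with the hypothesis of Theorem \ref{GUOTHEOREM} yields
\begin{align*}
\|Tx-Sx\|\le \lambda_1'\|Sx\|+\lambda_2'\|Tx\|,\quad \forall x\in\mathcal{X},
\end{align*}
which is exactly a Casazza-Kalton estimate with both constants strictly below $1$.

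Finally I would invoke Theorem \ref{CASAZZAKALTONTHEOREM} with the pair $(\lambda_1',\lambda_2')$. This at once gives that $T$ is bounded and invertible, and substituting $\lambda_1'=\lambda_1+\varepsilon\|TS^{-1}\|$ and $\lambda_2'=\lambda_2-\varepsilon$ into the two chains of inequalities supplied by that theorem reproduces verbatim the four bounds claimed in Theorem \ref{GUOTHEOREM}; since $\varepsilon$ was an arbitrary admissible parameter, these bounds hold for every such $\varepsilon$, while the invertibility conclusion is itself independent of $\varepsilon$. I expect the main (and essentially the only) obstacle to be spotting the trade-off identity: once one sees that boundedness furnishes $\|Tx\|\le\|TS^{-1}\|\,\|Sx\|$, matching the admissibility conditions on $\varepsilon$ to the hypotheses $\lambda_1',\lambda_2'\in[0,1)$ of Theorem \ref{CASAZZAKALTONTHEOREM} is routine.
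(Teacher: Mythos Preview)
Your argument is correct and is precisely the approach the paper uses: the paper does not itself prove Theorem~\ref{GUOTHEOREM} (it is quoted from \cite{GUO}), but its Lipschitz generalization, Theorem~\ref{LASTTHEOREM}, is proved by exactly your trade-off---writing $\lambda_2\|Ru-Rv\|=(\lambda_2-\varepsilon)\|Ru-Rv\|+\varepsilon\|Ru-Rv\|$ for $R=TS^{-1}$, bounding the last term by $\varepsilon\,\operatorname{Lip}(R)\|u-v\|$, and then invoking the sub-unital perturbation theorem with the adjusted constants $(\lambda_1+\varepsilon\operatorname{Lip}(R),\,\lambda_2-\varepsilon)$. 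The only loose end is that invertibility must also be asserted when $\lambda_2=0$, where no admissible $\varepsilon$ exists; but there Theorem~\ref{CASAZZAKALTONTHEOREM} already applies directly, so that case is trivial.
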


  Theorem \ref{CASAZZAKALTONTHEOREM} and its variants are useful in various studies such as stability of frames for Hilbert spaces \cite{CASAZZACHRISTENSEN}, stability of frames and atomic decompositions for Banach spaces \cite{STOEVA}, stability of frames  for Hilbert C*-modules \cite{HANJING},   stability of G-frames \cite{SUNSTABILITY}, multipliers for Hilbert  spaces \cite{STOEVABALAZS}, quantum detection problem \cite{BOTELHOANDRADE}, continuous frames \cite{GABARDOHAN}, fusion frames \cite{CASAZZAKUTYNIOK}, operator representations of frames (dynamics of frames) \cite{CHRISTENSENHASANNASAB},  pseudo-inverses of operators \cite{DING}, outer inverses of operators \cite{YANGWANG}, shift-invariant spaces \cite{KOOLIM},  frame sequences \cite{CHRISTENSENLENNARD}, sampling \cite{ZHAOCASAZZA} etc.\\
  The main objective of this paper is to generalize Theorem \ref{CASAZZAKALTONTHEOREM} for Lipschitz functions between Banach spaces. We do this in Theorem \ref{IMPORTANTTHEOREM}. We show that our result generalizes  Soderlind-Campanato Perturbation (Theorem \ref{SODERLINDCAMPANATOAPP}) and Barbagallo-Ernst-Thera perturbation (Theorem \ref{BARBAGALLOAPP}). We then give an application to the theory of frames for metric spaces. Further, the notion of Lipschitz atomic decomposition for Banach spaces is introduced and a perturbation result is derived using Theorem \ref{IMPORTANTTHEOREM}.

  \section{Improving Casazza-Kalton-Christensen-van Eijndhoven perturbation}
   Let $\mathcal{M}$ 
  be  a metric space and $\mathcal{X}$ be a Banach space. Recall that a   function $f:\mathcal{M}  \rightarrow
  \mathcal{X}$ is said to be Lipschitz if there exists $b> 0$ such that 
  \begin{align*}
  \|f(x)- f(y)\| \leq b\, d(x,y), \quad \forall x, y \in \mathcal{M}.
  \end{align*}
  A Lipschitz function $f:\mathcal{M}  \rightarrow
  \mathcal{X}$ is said to be bi-Lipschitz if there exists $a> 0$ such that 
  \begin{align*}
  a\, d(x,y) \leq \|f(x)- f(y)\| , \quad \forall x, y \in \mathcal{M}.
  \end{align*}
  \begin{definition}\cite{WEAVER}
  	Let	$\mathcal{X}$ be a Banach space.
  	\begin{enumerate}[\upshape(i)]
  		\item Let $\mathcal{M}$ be a  metric space. The collection 	$\operatorname{Lip}(\mathcal{M}, \mathcal{X})$
  		is defined as $\operatorname{Lip}(\mathcal{M}, \mathcal{X})\coloneqq \{f:\mathcal{M} 
  		\rightarrow \mathcal{X}  \operatorname{ is ~ Lipschitz} \}.$ For $f \in \operatorname{Lip}(\mathcal{M}, \mathcal{X})$, the Lipschitz number 
  		is defined as 
  		\begin{align*}
  		\operatorname{Lip}(f)\coloneqq \sup_{x, y \in \mathcal{M}, x\neq
  			y} \frac{\|f(x)-f(y)\|}{d(x,y)}.
  		\end{align*}
  		\item Let $(\mathcal{M}, 0)$ be a pointed metric space. The collection 	$\operatorname{Lip}_0(\mathcal{M}, \mathcal{X})$
  		is defined as $\operatorname{Lip}_0(\mathcal{M}, \mathcal{X})\coloneqq \{f:\mathcal{M} 
  		\rightarrow \mathcal{X}  \operatorname{ is ~ Lipschitz ~ and } f(0)=0\}.$
  		For $f \in \operatorname{Lip}_0(\mathcal{M}, \mathcal{X})$, the Lipschitz norm
  		is defined as 
  		\begin{align*}
  		\|f\|_{\operatorname{Lip}_0}\coloneqq \sup_{x, y \in \mathcal{M}, x\neq
  			y} \frac{\|f(x)-f(y)\|}{d(x,y)}.
  		\end{align*}
  	\end{enumerate}
  	
  \end{definition}
  \begin{theorem}\cite{WEAVER}\label{BANACHALGEBRA}
  	Let	$\mathcal{X}$ be a Banach space.
  	\begin{enumerate}[\upshape(i)]
  		\item If $\mathcal{M}$ is a  metric space, then 	$\operatorname{Lip}(\mathcal{M},
  		\mathcal{X})$ is a semi-normed vector  space w.r.t. the semi-norm  $\operatorname{Lip}(\cdot)$.
  		\item If $(\mathcal{M}, 0)$ is a pointed metric space, then 	$\operatorname{Lip}_0(\mathcal{M},
  		\mathcal{X})$ is a Banach space w.r.t. the  norm
  		$\|\cdot\|_{\operatorname{Lip}_0}$. Further, $\operatorname{Lip}_0(\mathcal{X})\coloneqq\operatorname{Lip}_0(\mathcal{X},
  		\mathcal{X})$ is a unital Banach algebra. In particular, if $T \in \operatorname{Lip}_0(\mathcal{X})$ satisfies $
  		\|T-I_\mathcal{X}\|_{\operatorname{Lip}_0}<1,$ 
  		then $T $ is invertible and $T^{-1} \in \operatorname{Lip}_0(\mathcal{X})$.
  	\end{enumerate}
  \end{theorem}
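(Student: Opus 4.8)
This statement has a routine layer and a substantive layer, which I would separate at the outset. For part~(i) I would just verify the semi-norm axioms straight from the definition of $\operatorname{Lip}(\cdot)$: closure of $\operatorname{Lip}(\mathcal{M},\mathcal{X})$ under addition and scalar multiplication, the homogeneity $\operatorname{Lip}(\alpha f)=\abs{\alpha}\operatorname{Lip}(f)$, and the subadditivity $\operatorname{Lip}(f+g)\le\operatorname{Lip}(f)+\operatorname{Lip}(g)$ all drop out by dividing the relevant difference $\|f(x)-f(y)\|$ by $d(x,y)$ and taking suprema. It is only a semi-norm because every constant map has Lipschitz number $0$. Passing to the pointed space in part~(ii), the condition $f(0)=0$ removes exactly those constants, so $\|f\|_{\operatorname{Lip}_0}=0$ forces $f\equiv 0$ and $\|\cdot\|_{\operatorname{Lip}_0}$ becomes a genuine norm.

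The first real step is completeness. Given a Cauchy sequence $(f_n)$ in $\operatorname{Lip}_0(\mathcal{M},\mathcal{X})$, I would first produce a pointwise limit: since $(f_n-f_m)(0)=0$, the bound $\|f_n(x)-f_m(x)\|\le\|f_n-f_m\|_{\operatorname{Lip}_0}\,d(x,0)$ shows $(f_n(x))_n$ is Cauchy in $\mathcal{X}$ for each fixed $x$, hence converges to some $f(x)$ with $f(0)=0$. Boundedness of a Cauchy sequence then gives $\|f(x)-f(y)\|=\lim_n\|f_n(x)-f_n(y)\|\le(\sup_n\|f_n\|_{\operatorname{Lip}_0})\,d(x,y)$, so $f\in\operatorname{Lip}_0$. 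Finally, fixing $n$ large and letting $m\to\infty$ inside the difference quotient for $f_n-f_m$ upgrades pointwise convergence to convergence in $\|\cdot\|_{\operatorname{Lip}_0}$, which is exactly completeness.

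For the algebra statement with $\mathcal{M}=\mathcal{X}$ the product is composition: $(g\circ f)(0)=g(f(0))=g(0)=0$ keeps us inside $\operatorname{Lip}_0(\mathcal{X})$, chaining the two Lipschitz estimates gives submultiplicativity $\|g\circ f\|_{\operatorname{Lip}_0}\le\|g\|_{\operatorname{Lip}_0}\|f\|_{\operatorname{Lip}_0}$, and $I_\mathcal{X}$ is the unit with $\|I_\mathcal{X}\|_{\operatorname{Lip}_0}=1$. The part I expect to be the genuine obstacle is the invertibility conclusion, and here the classical Neumann-series route $\sum_{n\ge 0}(I_\mathcal{X}-T)^{\circ n}$ is precisely what I would \emph{avoid}: composition of genuinely nonlinear Lipschitz maps is only right-distributive over addition, so the telescoping identity that drives the linear Banach-algebra argument fails for iterated compositions. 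Instead I would invoke the Banach contraction-mapping principle. Setting $A:=I_\mathcal{X}-T$, so that $\operatorname{Lip}(A)=\|T-I_\mathcal{X}\|_{\operatorname{Lip}_0}<1$ and $A(0)=0$, the estimate $\|Tx-Ty\|\ge(1-\operatorname{Lip}(A))\|x-y\|$ yields injectivity together with a lower bi-Lipschitz bound in one stroke; for surjectivity, solving $Tx=y$ is the same as finding a fixed point of the contraction $x\mapsto Ax+y$, which exists and is unique. The lower bound then forces $T^{-1}$ to be Lipschitz with $\operatorname{Lip}(T^{-1})\le(1-\operatorname{Lip}(A))^{-1}$, and $T(0)=0$ gives $T^{-1}(0)=0$, so $T^{-1}\in\operatorname{Lip}_0(\mathcal{X})$.
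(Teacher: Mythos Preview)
The paper does not prove this theorem; it is cited from Weaver and used as background, so there is no proof in the paper to compare against. Your argument is correct and self-contained.

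Your decision to establish the invertibility claim via the Banach fixed-point theorem rather than a Neumann series is well taken, for exactly the reason you identify: composition of nonlinear maps is not left-distributive over addition, so the telescoping identity $(I_\mathcal{X}-A)\circ\bigl(\sum_{k=0}^{n}A^{\circ k}\bigr)=I_\mathcal{X}-A^{\circ(n+1)}$ that drives the linear Neumann argument is unavailable here. That same failure of bilinearity means the phrase ``unital Banach algebra'' in the statement is somewhat loose if read literally---composition on $\operatorname{Lip}_0(\mathcal{X})$ is associative and submultiplicative with unit $I_\mathcal{X}$ of norm $1$, but the product is not bilinear---yet the invertibility conclusion, which is all the paper actually uses downstream, is unaffected, and your contraction-mapping proof delivers it cleanly together with the explicit bound $\operatorname{Lip}(T^{-1})\le(1-\|T-I_\mathcal{X}\|_{\operatorname{Lip}_0})^{-1}$.
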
	
  We now develop perturbation result for Lipschitz functions using various results. Our developments are motivated from the linear version of improvement of Paley-Wiener theorem by van Eijndhoven \cite{VANEIJNDHOVEN}. Since $ \text{Lip}_0(\mathcal{X})$ is a unital Banach algebra, we can talk about the notion of spectrum and resolvents. In the remaining part of the paper, the spectrum of an element $ T\in \text{Lip}_0(\mathcal{X})$ is denoted by $\sigma(T)$ and the resolvent by $\rho(T).$
  \begin{lemma}\label{FIRSTLEMMA}
  	Let  $ \mathcal{X}$ be a Banach space and $ T\in \text{Lip}_0(\mathcal{X})$. Suppose there are $\alpha_0\in \mathbb{R}$ and $ \beta_0>0$  such that 
  	\begin{align}\label{FIRSTLEMMAINEQUALITY}
  	\|Tx-Ty-\alpha(x-y)\|\geq \beta_0\|x-y\|, \quad \forall x,y \in \mathcal{X}, \forall \alpha \leq \alpha_0.
  	\end{align}
  	Then $(-\infty, \alpha_0]\subseteq \rho(T).$
  \end{lemma}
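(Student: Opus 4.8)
The plan is to show that for every $\alpha \le \alpha_0$ the element $\alpha I_{\mathcal{X}}-T$ (equivalently $T-\alpha I_{\mathcal{X}}$, since these differ by the invertible scalar $-I_{\mathcal{X}}$) is invertible in the unital Banach algebra $\operatorname{Lip}_0(\mathcal{X})$, i.e. that $\alpha\in\rho(T)$. First I would reformulate the hypothesis \eqref{FIRSTLEMMAINEQUALITY}: it says exactly that $T-\alpha I_{\mathcal{X}}$ is bi-Lipschitz with lower constant $\beta_0$ for every $\alpha\le\alpha_0$. In particular each such map is injective, and $(T-\alpha I_{\mathcal{X}})(0)=0$. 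Moreover, \emph{whenever} $T-\alpha I_{\mathcal{X}}$ is surjective, its set-theoretic inverse $R$ fixes $0$ and satisfies $\operatorname{Lip}(R)\le 1/\beta_0$, so that $R\in\operatorname{Lip}_0(\mathcal{X})$ with $\|R\|_{\operatorname{Lip}_0}\le 1/\beta_0$. Thus the only obstruction to invertibility is surjectivity, and the decisive feature is that the would-be inverses are bounded in Lipschitz norm by the single constant $1/\beta_0$, uniformly over all $\alpha\le\alpha_0$. This uniform bound is the engine of the whole argument.

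With this in hand I would set $A:=\{\alpha\le\alpha_0 : T-\alpha I_{\mathcal{X}}\text{ is invertible in }\operatorname{Lip}_0(\mathcal{X})\}$ and prove $A=(-\infty,\alpha_0]$ by a connectedness argument: since $(-\infty,\alpha_0]$ is connected, it suffices to show $A$ is nonempty, relatively open, and relatively closed. For nonemptiness, for $\alpha<-\|T\|_{\operatorname{Lip}_0}$ I would write $T-\alpha I_{\mathcal{X}}=-\alpha\,(I_{\mathcal{X}}-\alpha^{-1}T)$ and observe $\|\alpha^{-1}T\|_{\operatorname{Lip}_0}<1$; Theorem \ref{BANACHALGEBRA}(ii) then yields invertibility, so $A$ contains an entire far-negative ray. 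Relative openness of $A$ follows from the openness of the invertible group in a unital Banach algebra together with the norm-continuity of $\alpha\mapsto T-\alpha I_{\mathcal{X}}$; indeed $\|(T-\alpha I_{\mathcal{X}})-(T-\alpha' I_{\mathcal{X}})\|_{\operatorname{Lip}_0}=|\alpha-\alpha'|$.

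The heart of the proof is relative closedness. Let $\alpha_n\in A$ with $\alpha_n\to\alpha\in(-\infty,\alpha_0]$, and put $R_n:=(T-\alpha_n I_{\mathcal{X}})^{-1}$. Because each $\alpha_n\le\alpha_0$, the reformulation above gives the uniform bound $\|R_n\|_{\operatorname{Lip}_0}\le 1/\beta_0$. For $n$ large enough that $|\alpha_n-\alpha|<\beta_0$ I would use the identity $T-\alpha I_{\mathcal{X}}=(T-\alpha_n I_{\mathcal{X}})\bigl(I_{\mathcal{X}}+(\alpha_n-\alpha)R_n\bigr)$, noting $\|(\alpha_n-\alpha)R_n\|_{\operatorname{Lip}_0}\le |\alpha_n-\alpha|/\beta_0<1$, so that the bracketed factor is invertible by Theorem \ref{BANACHALGEBRA}(ii). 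Hence $T-\alpha I_{\mathcal{X}}$ is a composition of two invertible elements of $\operatorname{Lip}_0(\mathcal{X})$, so $\alpha\in A$, proving $A$ closed. Connectedness then gives $A=(-\infty,\alpha_0]$, i.e. $(-\infty,\alpha_0]\subseteq\rho(T)$.

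I expect the main obstacle to be surjectivity of $T-\alpha I_{\mathcal{X}}$: bi-Lipschitz-ness delivers injectivity and a Lipschitz inverse on the range for free, but gives no control on whether the range is all of $\mathcal{X}$, and unlike the linear case there is no linear-algebraic shortcut. The mechanism that defeats this obstacle is precisely the uniformity of the lower bound $\beta_0$ across all $\alpha\le\alpha_0$, which forces the inverses to be uniformly bounded and thereby lets invertibility propagate continuously from the far-negative ray (where the Neumann-series criterion of Theorem \ref{BANACHALGEBRA}(ii) applies directly) all the way up to $\alpha_0$.
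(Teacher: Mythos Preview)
Your argument is correct and runs on the same engine as the paper's---the uniform bound $\|(T-\alpha I_{\mathcal{X}})^{-1}\|_{\operatorname{Lip}_0}\le 1/\beta_0$ for every $\alpha\le\alpha_0$ at which the inverse exists---but packages it differently. Instead of your open/closed/nonempty connectedness argument, the paper argues by contradiction: it sets $\lambda_0=\min(\sigma(T)\cap\mathbb{R})$, assumes $\lambda_0<\alpha_0$, and shows $T-\lambda_0 I_{\mathcal{X}}$ is surjective by taking $\alpha_n=\lambda_0-\tfrac{1}{n}\in\rho(T)$, forming $x_n=(T-\alpha_n I_{\mathcal{X}})^{-1}y$, invoking the resolvent identity together with the uniform bound to prove $\{x_n\}$ is Cauchy, and verifying that its limit is a preimage of $y$. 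Your route is tidier---it avoids the appeal to compactness of $\sigma(T)$ needed for the minimum to exist and replaces the hands-on Cauchy-sequence construction by a one-line perturbation-of-invertibles step---whereas the paper's route is more explicitly constructive. One small caution on your closedness step: if you ever unwind the factorization pointwise for composition rather than citing the Banach-algebra identity, it is the order $T-\alpha I_{\mathcal{X}}=\bigl(I_{\mathcal{X}}+(\alpha_n-\alpha)R_n\bigr)\circ(T-\alpha_n I_{\mathcal{X}})$ that checks directly (from $(T-\alpha I_{\mathcal{X}})\circ R_n=I_{\mathcal{X}}+(\alpha_n-\alpha)R_n$); either order suffices for your conclusion.
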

  \begin{proof}
If $\sigma(T)\cap\mathbb{R}=\emptyset$, then there is nothing to argue. So let $\sigma(T)\cap\mathbb{R}\neq \emptyset$. Since $\sigma(T)$ is non empty, define
\begin{align*}
\lambda_0\coloneqq \min\{\lambda\in \mathbb{R}: \lambda \in \sigma(T)\cap \mathbb{R}\}.
\end{align*} 
Claim: $\lambda_0\geq \alpha_0$. If this is false, we have $\lambda_0< \alpha_0$. From the assumption we then have 
	\begin{align*}
\|Tx-Ty-\lambda_0(x-y)\|\geq \beta_0\|x-y\|, \quad \forall x,y \in \mathcal{X} 
\end{align*}
which implies 
\begin{align*}
\|(T-\lambda_0I_\mathcal{X})x-(T-\lambda_0I_\mathcal{X})y\|\geq \beta_0\|x-y\|, \quad \forall x,y \in \mathcal{X} 
\end{align*}
which says that $T-\lambda_0I_\mathcal{X}$ is injective. We now try to show that $T-\lambda_0I_\mathcal{X}$ is surjective. Let $y \in \mathcal{X}$. Define 
\begin{align*}
\alpha_n\coloneqq \lambda_0-\frac{1}{n}, \quad \forall n \in \mathbb{N}.
\end{align*}
From the definition of $\lambda_0$ we then have $T-\alpha_nI_\mathcal{X}$ is invertible for all  $n \in \mathbb{N}$. Using Inequality (\ref{FIRSTLEMMAINEQUALITY}) we  have 

\begin{align*}
\|(T-\alpha_nI_\mathcal{X})x-(T-\alpha_nI_\mathcal{X})y\|\geq \beta_0\|x-y\|, \quad \forall x,y \in \mathcal{X}, \forall n \in \mathbb{N} 
\end{align*}
which gives 
\begin{align*}
\|u-v\|\geq \beta_0\|(T-\alpha_nI_\mathcal{X})^{-1}u-(T-\alpha_nI_\mathcal{X})^{-1}v\|, \quad \forall u,v \in \mathcal{X}, \forall n \in \mathbb{N} 
\end{align*}
which implies 
\begin{align*}
\|(T-\alpha_nI_\mathcal{X})^{-1}\|_{\text{Lip}_0}\leq \frac{1}{\beta_0}, \quad  \forall n \in \mathbb{N}.
\end{align*}
Now define 
\begin{align*}
x_n\coloneqq (T-\alpha_nI_\mathcal{X})^{-1}y, \quad \forall n \in \mathbb{N}.
\end{align*}
Now using resolvent identity in Banach algebra  \cite{DALES} we get 
\begin{align*}
\|x_n-x_m\|&=\left|\frac{1}{n}-\frac{1}{m}\right|\|(T-\alpha_nI_\mathcal{X})^{-1}(T-\alpha_mI_\mathcal{X})^{-1}y\|\\
&\leq \left|\frac{1}{n}-\frac{1}{m}\right|\|(T-\alpha_nI_\mathcal{X})^{-1}\|_{\text{Lip}_0}\|(T-\alpha_mI_\mathcal{X})^{-1}\|_{\text{Lip}_0}\|y\|\\
&\leq \left|\frac{1}{n}-\frac{1}{m}\right|\frac{1}{\beta_0^2}\|y\|, \quad \forall n,m \in \mathbb{N}.
\end{align*}
Thus $\{x_n\}_n$ is Cauchy and it converges to an element, say $x \in \mathcal{X}$. Finally 
\begin{align*}
\|(T-\lambda_0I_\mathcal{X})x-y\|&=\|(T-\lambda_0I_\mathcal{X})x-(T-\alpha_nI_\mathcal{X})x_n\|\\
&\leq \|(T-\lambda_0I_\mathcal{X})x-(T-\lambda_0I_\mathcal{X})x_n\|+\|(T-\lambda_0I_\mathcal{X})x_n-(T-\alpha_nI_\mathcal{X})x_n\|\\
&\leq \|(T-\lambda_0I_\mathcal{X})^{-1}\|_{\text{Lip}_0}\|x_n-x\|+|\alpha_n-\lambda_0|\|x_n\|\\
&\leq \|(T-\lambda_0I_\mathcal{X})^{-1}\|_{\text{Lip}_0}\|x_n-x\|+\frac{1}{n}\|x_n\|, \quad \forall n \in \mathbb{N}.
\end{align*}
Letting $n\to \infty$, we get $(T-\lambda_0I_\mathcal{X})x=y$. Thus $T-\lambda_0I_\mathcal{X} $ is invertible. Hence $\lambda_0\in \sigma(T)\cap\rho(T)=\emptyset$ which is a contradiction which proves the claim as well as the lemma.
\end{proof}
\begin{remark}
Note that Lemma \ref{FIRSTLEMMA} also holds for real Banach spaces.  In fact, if spectrum   $\sigma(T)$ is empty then the lemma holds trivially and if it is non empty, then it becomes a compact set  and proof of Lemma \ref{FIRSTLEMMA} can be carried over. Due to this, all coming results in this paper are valid for real Banach spaces also.
\end{remark}
  \begin{theorem}\label{UNIONTHEOREM}
  		Let  $ \mathcal{X}$ be a Banach space and $ T\in \text{Lip}_0(\mathcal{X})$. Suppose there are $\alpha_1\in \mathbb{R}$ and $ \beta_1>0$  such that 
  \begin{align}\label{FIRSTTHMINEQUALITY}
  \|Tx-Ty-\alpha(x-y)\|\geq \beta_1\|x-y\|, \quad \forall x,y \in \mathcal{X}, \forall \alpha \leq \alpha_1.
  \end{align}
  Then $(-\infty, \alpha_1+\beta_1)\subseteq \rho(T).$	
  \end{theorem}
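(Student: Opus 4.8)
The plan is to leverage Lemma \ref{FIRSTLEMMA} together with the Banach-algebra structure of $\text{Lip}_0(\mathcal{X})$ recorded in Theorem \ref{BANACHALGEBRA}. Observe first that hypothesis (\ref{FIRSTTHMINEQUALITY}) is exactly hypothesis (\ref{FIRSTLEMMAINEQUALITY}) with $\alpha_0=\alpha_1$ and $\beta_0=\beta_1$, so Lemma \ref{FIRSTLEMMA} already delivers $(-\infty,\alpha_1]\subseteq\rho(T)$ for free. It therefore remains only to show that every $\lambda$ in the open interval $(\alpha_1,\alpha_1+\beta_1)$ lies in $\rho(T)$, and for this I would perturb off the endpoint $\alpha_1$.

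Concretely, write $A\coloneqq T-\alpha_1 I_\mathcal{X}\in\text{Lip}_0(\mathcal{X})$. Applying (\ref{FIRSTTHMINEQUALITY}) with $\alpha=\alpha_1$ gives $\|Ax-Ay\|\geq\beta_1\|x-y\|$ for all $x,y$, so $A$ is injective; since $\alpha_1\in\rho(T)$ by the previous paragraph, $A$ is in fact invertible, and substituting $u=Ax$, $v=Ay$ into the last inequality yields $\|A^{-1}u-A^{-1}v\|\leq\beta_1^{-1}\|u-v\|$, i.e. $\|A^{-1}\|_{\text{Lip}_0}\leq 1/\beta_1$ (the normalization $A^{-1}(0)=0$ is automatic from $T(0)=0$). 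Now fix $\lambda\in(\alpha_1,\alpha_1+\beta_1)$ and set $\mu\coloneqq\lambda-\alpha_1\in(0,\beta_1)$. The decisive step is the factorization $T-\lambda I_\mathcal{X}=A-\mu I_\mathcal{X}=(I_\mathcal{X}-\mu A^{-1})\circ A$ in the algebra $\text{Lip}_0(\mathcal{X})$, which one checks pointwise using $A^{-1}(Ax)=x$.

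The one subtlety I would flag is precisely this factorization: because $T$ (hence $A$) is only Lipschitz and not linear, the familiar linear identity $A-\mu I_\mathcal{X}=A\circ(I_\mathcal{X}-\mu A^{-1})$ is false, and one must instead compose in the opposite order, $(I_\mathcal{X}-\mu A^{-1})\circ A$, so that the cancellation $A^{-1}\circ A=I_\mathcal{X}$ does the work rather than $A\circ A^{-1}$. Once the correct order is in hand the rest is immediate: $\|\mu A^{-1}\|_{\text{Lip}_0}=\mu\,\|A^{-1}\|_{\text{Lip}_0}\leq\mu/\beta_1<1$, so by the Neumann-type criterion of Theorem \ref{BANACHALGEBRA}(ii) the factor $I_\mathcal{X}-\mu A^{-1}$ is invertible in $\text{Lip}_0(\mathcal{X})$. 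As a composition of two invertible elements, $T-\lambda I_\mathcal{X}$ is then invertible, giving $\lambda\in\rho(T)$. Combining this with $(-\infty,\alpha_1]\subseteq\rho(T)$ yields $(-\infty,\alpha_1+\beta_1)\subseteq\rho(T)$, as claimed.
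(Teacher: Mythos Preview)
Your proof is correct but follows a genuinely different route from the paper. The paper's argument is a parameter-shifting trick: for each $0<\eta<\beta_1$ it verifies that hypothesis (\ref{FIRSTTHMINEQUALITY}) persists with the shifted constants $\alpha_0=\alpha_1+\eta$ and $\beta_0=\beta_1-\eta$ (using a triangle inequality for $\alpha_1<\alpha\leq\alpha_0$), reapplies Lemma \ref{FIRSTLEMMA} to obtain $(-\infty,\alpha_1+\eta]\subseteq\rho(T)$, and then takes the union over $\eta$. You instead invoke Lemma \ref{FIRSTLEMMA} only once, at $\alpha_1$, and then push into the interval $(\alpha_1,\alpha_1+\beta_1)$ by a direct Neumann perturbation in the Banach algebra $\text{Lip}_0(\mathcal{X})$: the factorization $T-\lambda I_\mathcal{X}=(I_\mathcal{X}-\mu A^{-1})\circ A$ together with $\|\mu A^{-1}\|_{\text{Lip}_0}<1$ and Theorem \ref{BANACHALGEBRA}(ii). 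Your observation that the composition must be written in the order $(I_\mathcal{X}-\mu A^{-1})\circ A$ rather than $A\circ(I_\mathcal{X}-\mu A^{-1})$ is exactly the point where nonlinearity bites, and you handle it correctly. Your argument is essentially the standard ``resolvent set is open with radius $1/\|A^{-1}\|$'' fact, specialized to this nonlinear algebra; it is slightly more conceptual and avoids the union step, while the paper's approach has the advantage of staying entirely within the framework of Lemma \ref{FIRSTLEMMA} without any further algebraic input.
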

   \begin{proof}
  	Let $0<\eta<\beta_1$. Define $\alpha_0\coloneqq \alpha_1+\eta$ and $\beta_0\coloneqq \beta_1-\eta$. Let $\alpha\leq \alpha_0$. 
  	
  	Case (i): $\alpha\leq  \alpha_1$. Then from Inequality (\ref{FIRSTTHMINEQUALITY}), $\|Tx-Ty-\alpha(x-y)\|\geq \beta_1\|x-y\|,  \forall x,y \in \mathcal{X}$.
  	
  	 Case (ii):
  	Let $\alpha_1< \alpha\leq \alpha_0$. Then 
  	\begin{align*}
  	\|Tx-Ty-\alpha(x-y)\|&\geq 	\|Tx-Ty-\alpha_1(x-y)\|-(\alpha-\alpha_1)\|x-y\|\\
  	&\geq(\beta_1-\eta)\|x-y\|=\beta_0\|x-y\|, \quad \forall x,y \in \mathcal{X}.
  	\end{align*}
  	Lemma \ref{FIRSTLEMMA} now gives $(-\infty, \alpha_0]\subseteq \rho(T).$ Result follows by observing that 
  	\begin{align*}
  	(-\infty, \alpha_1+\beta_1)=\bigcup\limits_{0<\eta<\beta_1}(-\infty, \alpha_1+\eta].
  	\end{align*}
  \end{proof}
  \begin{theorem}\label{OIDENTITYTHEOREM}
  		Let $ \mathcal{X}$ be a Banach space,   $ T : \mathcal{X}\rightarrow \mathcal{X}$ be a map,   $T0=0$ and there exist  $ \lambda_1,\lambda_2 \in \left [0, 1  \right )$ such that 
  	\begin{align} \label{PER}
  		 \|Tx-Ty-(x-y)\|\leq\lambda_1\|x-y\|+\lambda_2\|Tx-Ty\|,\quad \forall x,y \in  \mathcal{X}.
  		\end{align} 
  		Then 
  		\begin{enumerate}[\upshape(i)]
  			\item $T$ is Lipschitz and 
  			\begin{align}\label{PERE}
  			\frac{1-\lambda_1}{1+\lambda_2}\|x-y\|\leq\|Tx-Ty\|\leq\frac{1+\lambda_1}{1-\lambda_2} \|x-y\|, \quad \forall x,y \in \mathcal{X}.
  			\end{align}
  			\item We have 
  			\begin{align*}
  			\left(-\infty, \frac{1-\lambda_1}{1+\lambda_2}\right)\subseteq \rho(T).
  			\end{align*}
  			\item $T$ is invertible and 
  			\begin{align*}
  			\frac{1-\lambda_2}{1+\lambda_1}\|x-y\|\leq\|T^{-1}x-T^{-1}y\|\leq\frac{1+\lambda_2}{1-\lambda_1} \|x-y\|, \quad\forall x,y \in  \mathcal{X}.
  			\end{align*}
  			\item We have 
  			\begin{align*}
  			\frac{1-\lambda_1}{1+\lambda_2}\leq\|T\|_{\text{Lip}_0}\leq\frac{1+\lambda_1}{1-\lambda_2}  \quad \text{ and } \quad \frac{1-\lambda_2}{1+\lambda_1}\leq\|T^{-1}\|_{\text{Lip}_0}\leq\frac{1+\lambda_2}{1-\lambda_1}. 
  			\end{align*}
  		\end{enumerate}
  \end{theorem}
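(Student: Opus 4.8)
The plan is to derive the four items in order, using the two-sided Lipschitz estimate of (i) as the engine that drives everything else.

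First, for (i) I would treat the perturbation hypothesis (\ref{PER}) as a scalar inequality in the two unknowns $\|Tx-Ty\|$ and $\|x-y\|$. Writing $\|Tx-Ty\|\le\|Tx-Ty-(x-y)\|+\|x-y\|$ and inserting (\ref{PER}) gives $(1-\lambda_2)\|Tx-Ty\|\le(1+\lambda_1)\|x-y\|$; since $\lambda_2<1$ this yields the upper bound and in particular shows $T$ is Lipschitz. For the lower bound I would use the reverse triangle inequality $\|x-y\|-\|Tx-Ty\|\le\|Tx-Ty-(x-y)\|$, insert (\ref{PER}) again, and collect terms to get $(1-\lambda_1)\|x-y\|\le(1+\lambda_2)\|Tx-Ty\|$. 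Together with $T0=0$ this places $T$ in $\text{Lip}_0(\mathcal{X})$, so that its spectrum and resolvent are meaningful.

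The substance is item (ii), where I would verify the hypothesis of Theorem \ref{UNIONTHEOREM} with the precise constants $\alpha_1=0$ and $\beta_1=\frac{1-\lambda_1}{1+\lambda_2}$. Fix $\alpha\le 0$ and abbreviate $u=Tx-Ty$, $v=x-y$. Since $1-\alpha\ge 1>0$, the decomposition $u-\alpha v=(u-v)+(1-\alpha)v$ together with the reverse triangle inequality gives $\|u-\alpha v\|\ge(1-\alpha)\|v\|-\|u-v\|\ge(1-\alpha-\lambda_1)\|v\|-\lambda_2\|u\|$ after inserting (\ref{PER}). The trick is to eliminate $\|u\|$ through the crude bound $\|u\|\le\|u-\alpha v\|+|\alpha|\|v\|=\|u-\alpha v\|-\alpha\|v\|$, valid because $\alpha\le 0$; substituting and gathering the $\|u-\alpha v\|$ terms yields $(1+\lambda_2)\|u-\alpha v\|\ge(1-\lambda_1-\alpha(1-\lambda_2))\|v\|$, and dropping the nonnegative term $-\alpha(1-\lambda_2)$ produces exactly $\|u-\alpha v\|\ge\frac{1-\lambda_1}{1+\lambda_2}\|v\|$ uniformly in $\alpha\le 0$. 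Theorem \ref{UNIONTHEOREM} then returns $(-\infty,\alpha_1+\beta_1)=\left(-\infty,\frac{1-\lambda_1}{1+\lambda_2}\right)\subseteq\rho(T)$.

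Items (iii) and (iv) are then formal. Because $\lambda_1<1$ forces $\frac{1-\lambda_1}{1+\lambda_2}>0$, item (ii) already contains $0\in\rho(T)$, that is, $T$ is invertible in $\text{Lip}_0(\mathcal{X})$; note that all the genuine surjectivity work is hidden inside Lemma \ref{FIRSTLEMMA} feeding Theorem \ref{UNIONTHEOREM}, so nothing further is required here. For the inverse estimates I would substitute $x=Ta$, $y=Tb$ into the two-sided bound (\ref{PERE}) and re-solve for $\|a-b\|=\|T^{-1}x-T^{-1}y\|$, which interchanges numerators and denominators and gives the stated bounds with $\frac{1-\lambda_2}{1+\lambda_1}$ and $\frac{1+\lambda_2}{1-\lambda_1}$. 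Finally (iv) follows by taking the supremum over $x\neq y$ in the pointwise inequalities of (i) and (iii): every quotient lies between the two constants, so the Lipschitz norms do as well. The main obstacle is the constant-chasing in (ii): one must resist bounding $\|Tx-Ty\|$ by its own Lipschitz constant $\frac{1+\lambda_1}{1-\lambda_2}$ (which only delivers the weaker threshold $\frac{1-\lambda_1-2\lambda_2}{1-\lambda_2}$) and instead eliminate $\|Tx-Ty\|$ self-referentially against $\|u-\alpha v\|$, so that the sharp value $\frac{1-\lambda_1}{1+\lambda_2}$ survives uniformly across the entire ray $\alpha\le 0$.
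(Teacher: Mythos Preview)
Your proposal is correct and follows essentially the same route as the paper. In particular, your key step in (ii)---bounding $\|Tx-Ty\|$ by $\|Tx-Ty-\alpha(x-y)\|-\alpha\|x-y\|$ rather than by the Lipschitz constant from (i), then invoking Theorem~\ref{UNIONTHEOREM} with $\alpha_1=0$, $\beta_1=\frac{1-\lambda_1}{1+\lambda_2}$---is exactly the paper's computation, and your treatment of (i), (iii), (iv) matches as well.
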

  \begin{proof}
 Let  $x,y \in \mathcal{X}.$	Then using Inequality (\ref{PER}) 
 \begin{align*}
 \|Tx-Ty\|&\leq  \|Tx-Ty-(x-y)\|+\|x-y\|\leq \lambda_1\|x-y\|+\lambda_2\|Tx-Ty\|+\|x-y\|\\
 &=(1+\lambda_1)\|x-y\|+\lambda_2\|Tx-Ty\|\\
 &\implies \|Tx-Ty\|\leq\frac{1+\lambda_1}{1-\lambda_2} \|x-y\|
 \end{align*}
 and 
 \begin{align*}
 \|x-y\|&\leq \|Tx-Ty-(x-y)\|+\|Tx-Ty\|\leq \lambda_1\|x-y\|+\lambda_2\|Tx-Ty\|+\|Tx-Ty\|\\
 &=\lambda_1\|x-y\|+(1+\lambda_2)\|Tx-Ty\|\\
 &\implies \frac{1-\lambda_1}{1+\lambda_2}\|x-y\|\leq\|Tx-Ty\|.
 \end{align*}
 Hence $T$ is Lipschitz and (i) holds. Let $\alpha\leq0$.  Then 
 \begin{align*}
 \|Tx-Ty-\alpha(x-y)\|&=\|(1-\alpha)(x-y)-(x-y-(Tx-Ty))\|\\
 &\geq (1-\alpha)\|x-y\|-\|Tx-Ty-(x-y)\|\\
 &\geq (1-\alpha)\|x-y\|-\lambda_1\|x-y\|-\lambda_2\|Tx-Ty\|\\
 &= (1-\alpha-\lambda_1)\|x-y\|-\lambda_2\|Tx-Ty\|\\
 &\geq (1-\alpha-\lambda_1)\|x-y\|-\lambda_2\|Tx-Ty-\alpha(x-y)\|+\lambda_2\alpha \|x-y\|\\
 &=(1-\alpha-\lambda_1+\lambda_2\alpha)\|x-y\|-\lambda_2\|Tx-Ty-\alpha(x-y)\|
 \end{align*}
 which implies 
 
 \begin{align*}
 \|Tx-Ty-\alpha(x-y)\|&\geq \frac{1-\alpha-\lambda_1+\lambda_2\alpha}{1+\lambda_2}\|x-y\|\\
 &=\frac{1-\lambda_1-(1-\lambda_2)\alpha}{1+\lambda_2}\|x-y\|\geq \frac{1-\lambda_1}{1+\lambda_2}\|x-y\|.
 \end{align*}
 By applying Theorem \ref{UNIONTHEOREM} for $\alpha_1=0$ and $\beta_1=\frac{1-\lambda_1}{1+\lambda_2}$ we get $	
(-\infty, \frac{1-\lambda_1}{1+\lambda_2})\subseteq \rho(T).$ Since $0\in (-\infty, \frac{1-\lambda_1}{1+\lambda_2})$, $T $ is invertible. Using Inequality (\ref{PERE})  we then get 
 \begin{align*}
 	\frac{1-\lambda_1}{1+\lambda_2}\|T^{-1}u-T^{-1}v\|\leq\|u-v\|\leq\frac{1+\lambda_1}{1-\lambda_2} \|T^{-1}u-T^{-1}v\|, \quad \forall u,v \in \mathcal{X}
 \end{align*}
 which gives (iii). Finally (iv) follows from (i) and (iii).
  \end{proof}

\begin{theorem}\label{OSECONDTHEOREM}
	Let $ \mathcal{X}$, $ \mathcal{Y}$ be  Banach spaces and $S\in \text{Lip}_0(\mathcal{X}, \mathcal{Y}) $ be invertible. Let     $ T : \mathcal{X}\rightarrow \mathcal{Y}$ be a map,   $T0=0$ and there exist  $ \lambda_1,\lambda_2 \in \left [0, 1  \right )$ such that 
	\begin{align}\label{123}
	\|Tx-Ty-(Sx-Sy)\|\leq\lambda_1\|Sx-Sy\|+\lambda_2\|Tx-Ty\|,\quad \forall x,y \in  \mathcal{X}.
	\end{align} 
	Then 
	\begin{enumerate}[\upshape(i)]
		\item $T$ is Lipschitz and 
		\begin{align*}
		\frac{1-\lambda_1}{1+\lambda_2}\|Sx-Sy\|\leq\|Tx-Ty\|\leq\frac{1+\lambda_1}{1-\lambda_2} \|Sx-Sy\|, \quad \forall x,y \in \mathcal{X}.
		\end{align*}
		\item $\alpha S-T$ is invertible for all $\alpha \in \left(-\infty, \frac{1-\lambda_1}{1+\lambda_2}\right)$.
		\item $T$ is invertible and 
		\begin{align*}
		\frac{1-\lambda_2}{1+\lambda_1}\frac{1}{\|S\|_{\text{Lip}_0}}\|u-v\|\leq\|T^{-1}u-T^{-1}v\|\leq\frac{1+\lambda_2}{1-\lambda_1} \|S^{-1}\|_{\text{Lip}_0}\|u-v\|, \quad\forall u,v \in  \mathcal{Y}.
		\end{align*}
		\item We have 
		\begin{align*}
		\frac{1-\lambda_1}{1+\lambda_2} \|S\|_{\text{Lip}_0}\leq\|T\|_{\text{Lip}_0}\leq \frac{1+\lambda_1}{1-\lambda_2}\|S\|_{\text{Lip}_0}  \quad \text{ and } \quad \frac{1-\lambda_2}{1+\lambda_1}\frac{1}{\|S\|_{\text{Lip}_0}}\leq\|T^{-1}\|_{\text{Lip}_0}\leq\frac{1+\lambda_2}{1-\lambda_1}\|S^{-1}\|_{\text{Lip}_0} . 
		\end{align*}
	\end{enumerate}
\end{theorem}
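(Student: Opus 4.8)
The plan is to reduce the statement to the already-established identity-perturbation result, Theorem \ref{OIDENTITYTHEOREM}, by pre-composing $T$ with $S^{-1}$. Since $S\in\text{Lip}_0(\mathcal{X},\mathcal{Y})$ is invertible, $S^{-1}:\mathcal{Y}\to\mathcal{X}$ is a well-defined Lipschitz map with $S^{-1}0=0$, so the map $U:=T\circ S^{-1}:\mathcal{Y}\to\mathcal{Y}$ is well-defined and satisfies $U0=T(S^{-1}0)=T0=0$. The first step is to check that $U$ obeys the hypothesis of Theorem \ref{OIDENTITYTHEOREM} on $\mathcal{Y}$. Given $u,v\in\mathcal{Y}$, write $x:=S^{-1}u$ and $y:=S^{-1}v$, so that $u=Sx$, $v=Sy$, $Uu=Tx$, $Uv=Ty$; substituting into Inequality (\ref{123}) turns it into $\|Uu-Uv-(u-v)\|\le\lambda_1\|u-v\|+\lambda_2\|Uu-Uv\|$, which is precisely Inequality (\ref{PER}) for $U$.

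Having verified this, apply Theorem \ref{OIDENTITYTHEOREM} to $U$. This immediately yields that $U$ is Lipschitz with $\frac{1-\lambda_1}{1+\lambda_2}\|u-v\|\le\|Uu-Uv\|\le\frac{1+\lambda_1}{1-\lambda_2}\|u-v\|$, that $\left(-\infty,\frac{1-\lambda_1}{1+\lambda_2}\right)\subseteq\rho(U)$, and that $U$ is invertible with $\frac{1-\lambda_2}{1+\lambda_1}\|u-v\|\le\|U^{-1}u-U^{-1}v\|\le\frac{1+\lambda_2}{1-\lambda_1}\|u-v\|$. The second step is to transport each of these back to $T=U\circ S$. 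For (i), substitute $u=Sx$, $v=Sy$ in the two-sided bound for $U$ to recover exactly the claimed estimate. For (ii), observe that $\alpha S-T=(\alpha I_{\mathcal{Y}}-U)\circ S$; since $S$ is invertible and $\alpha I_{\mathcal{Y}}-U$ is invertible for every $\alpha\in\left(-\infty,\frac{1-\lambda_1}{1+\lambda_2}\right)$ by the resolvent statement for $U$, the composition is invertible.

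For (iii), $T=U\circ S$ is a composition of invertible maps, so $T$ is invertible with $T^{-1}=S^{-1}\circ U^{-1}$. The upper bound is routine: $\|T^{-1}u-T^{-1}v\|\le\|S^{-1}\|_{\text{Lip}_0}\|U^{-1}u-U^{-1}v\|\le\frac{1+\lambda_2}{1-\lambda_1}\|S^{-1}\|_{\text{Lip}_0}\|u-v\|$. The lower bound is the one point needing a little care: from $S$ being Lipschitz one gets $\|S^{-1}a-S^{-1}b\|\ge\frac{1}{\|S\|_{\text{Lip}_0}}\|a-b\|$ for all $a,b\in\mathcal{Y}$ (apply the Lipschitz estimate of $S$ to the pair $S^{-1}a,S^{-1}b$), and combining this with the lower bound for $U^{-1}$ gives $\|T^{-1}u-T^{-1}v\|\ge\frac{1}{\|S\|_{\text{Lip}_0}}\|U^{-1}u-U^{-1}v\|\ge\frac{1-\lambda_2}{1+\lambda_1}\frac{1}{\|S\|_{\text{Lip}_0}}\|u-v\|$.

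Finally, (iv) follows by passing to suprema in (i) and (iii): bounding $\|Sx-Sy\|\le\|S\|_{\text{Lip}_0}\|x-y\|$ in the two inequalities of (i) and taking $\sup_{x\neq y}$ of the resulting ratios produces $\frac{1-\lambda_1}{1+\lambda_2}\|S\|_{\text{Lip}_0}\le\|T\|_{\text{Lip}_0}\le\frac{1+\lambda_1}{1-\lambda_2}\|S\|_{\text{Lip}_0}$, and the analogous supremum over (iii) produces the bound on $\|T^{-1}\|_{\text{Lip}_0}$. I expect no serious obstacle: the entire weight of the argument rests on Theorem \ref{OIDENTITYTHEOREM}, and the only things to watch are that $U$ genuinely maps $\mathcal{Y}$ to $\mathcal{Y}$ and fixes $0$, the correct identification $\alpha S-T=(\alpha I_{\mathcal{Y}}-U)\circ S$ in part (ii), and the direction of the inequality $\|S^{-1}a-S^{-1}b\|\ge\|a-b\|/\|S\|_{\text{Lip}_0}$ used to obtain the lower bounds in (iii) and (iv).
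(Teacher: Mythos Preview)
Your proposal is correct and follows essentially the same approach as the paper: the paper also sets $R\coloneqq TS^{-1}$, verifies that Inequality (\ref{123}) becomes Inequality (\ref{PER}) for $R$, applies Theorem \ref{OIDENTITYTHEOREM}, and then reads off (i)--(iv) via the same substitutions and the factorization $\alpha S-T=(\alpha I_{\mathcal{Y}}-R)S$. The only cosmetic difference is your explicit justification of the inequality $\|S^{-1}a-S^{-1}b\|\ge \|a-b\|/\|S\|_{\text{Lip}_0}$, which the paper leaves implicit.
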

\begin{proof}
Define $R\coloneqq TS^{-1}$. Then Inequality (\ref{123}) gives 
\begin{align*}
\|TS^{-1}u-TS^{-1}v-(SS^{-1}u-SS^{-1}v)\|\leq \lambda_1\|SS^{-1}u-SS^{-1}v\|+\lambda_2\|TS^{-1}u-TS^{-1}v\|, \quad \forall u,v \in  \mathcal{Y},
\end{align*}
i.e., 
\begin{align*}
\|Ru-Rv-(u-v)\|\leq \lambda_1\|u-v\|+\lambda_2\|Ru-Rv\|, \quad \forall u,v \in  \mathcal{Y}.
\end{align*}
By applying Theorem \ref{OIDENTITYTHEOREM} to $R$ we get the following.
\begin{enumerate}[\upshape(i)]
	\item $R$ is Lipschitz hence $T$ is Lipschitz. Further, 
		\begin{align*}
	\frac{1-\lambda_1}{1+\lambda_2}\|Sx-Sy\|\leq\|R(Sx)-R(Sy)\|\leq\frac{1+\lambda_1}{1-\lambda_2} \|Sx-Sy\|, \quad \forall x,y \in \mathcal{X}.
	\end{align*}
	But  $\|R(Sx)-R(Sy)\|=\|Tx-Ty\|$, $\forall x,y \in \mathcal{X}.$
	\item $\alpha I_\mathcal{X}-R$ is invertible for all $ \alpha \in 	\left(-\infty, \frac{1-\lambda_1}{1+\lambda_2}\right)$. Since $S$ is invertible we then have $\alpha S-T$ is invertible for all $\alpha \in \left(-\infty, \frac{1-\lambda_1}{1+\lambda_2}\right)$.
	\item $R$ is invertible hence $T$ is invertible. Further, 
	
		\begin{align*}
	\frac{1-\lambda_2}{1+\lambda_1}\frac{1}{\|S\|_{\text{Lip}_0}}\|u-v\|&\leq	\frac{1}{\|S\|_{\text{Lip}_0}}\|R^{-1}u-R^{-1}v\|
	\leq \|S^{-1}(R^{-1}u)-S^{-1}(R^{-1}v)\|\\
	&=	\|T^{-1}u-T^{-1}v\|\leq \|S^{-1}\|_{\text{Lip}_0}\|R^{-1}u-R^{-1}v\|\\
	&\leq \frac{1+\lambda_2}{1-\lambda_1}\|S^{-1}\|_{\text{Lip}_0}\|u-v\|, \quad\forall u,v \in  \mathcal{Y}.
	\end{align*}
	\item This follows easily from (i) and (iii).
\end{enumerate}
\end{proof}
Our next job is to derive the results by removing  the condition $T0=0$.
\begin{theorem}\label{NOBASEPOINTFIRST}
Let $ \mathcal{X}$ be a Banach space,   $ T : \mathcal{X}\rightarrow \mathcal{X}$ be a map  and there exist  $ \lambda_1,\lambda_2 \in \left [0, 1  \right )$ such that 
\begin{align*} 
\|Tx-Ty-(x-y)\|\leq\lambda_1\|x-y\|+\lambda_2\|Tx-Ty\|,\quad \forall x,y \in  \mathcal{X}.
\end{align*} 
Then 
\begin{enumerate}[\upshape(i)]
	\item $T$ is Lipschitz and 
	\begin{align*}
	\frac{1-\lambda_1}{1+\lambda_2}\|x-y\|\leq\|Tx-Ty\|\leq\frac{1+\lambda_1}{1-\lambda_2} \|x-y\|, \quad \forall x,y \in \mathcal{X}.
	\end{align*}
	\item $\alpha I_\mathcal{X}-T$ is invertible for all $\alpha \in \left(-\infty, \frac{1-\lambda_1}{1+\lambda_2}\right)$.
\item $T$ is invertible and 
	\begin{align*}
	\frac{1-\lambda_2}{1+\lambda_1}\|x-y\|\leq\|T^{-1}x-T^{-1}y\|\leq\frac{1+\lambda_2}{1-\lambda_1} \|x-y\|, \quad\forall x,y \in  \mathcal{X}.
	\end{align*}
	\item We have 
	\begin{align*}
	\frac{1-\lambda_1}{1+\lambda_2}\leq \text{Lip}(T)\leq\frac{1+\lambda_1}{1-\lambda_2}  \quad \text{ and } \quad \frac{1-\lambda_2}{1+\lambda_1}\leq\text{Lip}(T^{-1})\leq\frac{1+\lambda_2}{1-\lambda_1}. 
	\end{align*}
\end{enumerate}
\end{theorem}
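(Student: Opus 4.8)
The plan is to reduce this to the base-point case already settled in Theorem \ref{OIDENTITYTHEOREM} by a simple translation. Define $\tilde{T}:\mathcal{X}\to\mathcal{X}$ by $\tilde{T}x:=Tx-T0$. Then $\tilde{T}0=0$, and for all $x,y\in\mathcal{X}$ the constant $T0$ cancels out of the difference, giving $\tilde{T}x-\tilde{T}y=Tx-Ty$. In particular the perturbation hypothesis for $T$ is verbatim the same inequality for $\tilde{T}$, namely
\begin{align*}
\|\tilde{T}x-\tilde{T}y-(x-y)\|\leq\lambda_1\|x-y\|+\lambda_2\|\tilde{T}x-\tilde{T}y\|,\quad \forall x,y\in\mathcal{X}.
\end{align*}
Thus $\tilde{T}$ satisfies all the hypotheses of Theorem \ref{OIDENTITYTHEOREM}, which I would apply to obtain its four conclusions for $\tilde{T}$, and then transfer each conclusion back to $T=\tilde{T}+T0$.

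For part (i), since $Tx-Ty=\tilde{T}x-\tilde{T}y$, the two-sided Lipschitz bound for $\tilde{T}$ is literally the bound claimed for $T$. For part (iv), the same identity of differences yields $\text{Lip}(T)=\text{Lip}(\tilde{T})=\|\tilde{T}\|_{\text{Lip}_0}$ (the last equality holding because $\tilde{T}0=0$), and likewise for the inverse once it is constructed, so (iv) will follow from Theorem \ref{OIDENTITYTHEOREM}(iv).

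The only point needing a little care is the invertibility transfer in parts (ii) and (iii). Here I would note that for each scalar $\alpha$,
\begin{align*}
(\alpha I_\mathcal{X}-T)x=(\alpha I_\mathcal{X}-\tilde{T})x-T0,\quad \forall x\in\mathcal{X},
\end{align*}
so $\alpha I_\mathcal{X}-T$ is the composition of $\alpha I_\mathcal{X}-\tilde{T}$ with the translation $z\mapsto z-T0$. Since translation by a fixed vector is a bijective isometry of $\mathcal{X}$, the map $\alpha I_\mathcal{X}-T$ is a bijection with Lipschitz inverse exactly when $\alpha I_\mathcal{X}-\tilde{T}$ is, its inverse being $y\mapsto(\alpha I_\mathcal{X}-\tilde{T})^{-1}(y+T0)$; thus Theorem \ref{OIDENTITYTHEOREM}(ii) gives (ii). Specializing to $\alpha=0$ shows $T$ is invertible with $T^{-1}y=\tilde{T}^{-1}(y-T0)$, and since $T^{-1}x-T^{-1}y=\tilde{T}^{-1}(x-T0)-\tilde{T}^{-1}(y-T0)$ with arguments differing by $x-y$, the inverse bounds from Theorem \ref{OIDENTITYTHEOREM}(iii) pass unchanged to $T^{-1}$, establishing (iii).

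There is no genuine obstacle here: the essential content already resides in Theorem \ref{OIDENTITYTHEOREM}, and removing the normalization $T0=0$ is cosmetic, because every quantity in the statement — the perturbation inequality, the Lipschitz estimates, and the inverse bounds — depends on $T$ only through the differences $Tx-Ty$, which are insensitive to additive constants. The single line worth double-checking is that subtracting a constant preserves both surjectivity and the quantitative Lipschitz estimate for the inverse, which it does precisely because translations are isometric bijections.
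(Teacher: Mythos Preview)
Your proof is correct and follows essentially the same route as the paper: define $\tilde{T}x\coloneqq Tx-T0$, observe that the perturbation inequality is unchanged because it involves only differences $Tx-Ty=\tilde{T}x-\tilde{T}y$, apply Theorem \ref{OIDENTITYTHEOREM} to $\tilde{T}$, and then transfer the conclusions back via the fact that a map is bijective (with the same Lipschitz data on differences) if and only if its translate is. Your write-up is in fact more explicit than the paper's about the translation argument in parts (ii) and (iii), but the substance is identical.
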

\begin{proof}
Define 
\begin{align*}
\tilde{T} x \coloneqq Tx-T0, \quad \forall x \in \mathcal{X}.
\end{align*}	
Then $\tilde{T}0=0$ and
\begin{align*}
\|\tilde{T}x-\tilde{T}y-(x-y)\|&=\|Tx-Ty-(x-y)\|\leq\lambda_1\|x-y\|+\lambda_2\|Tx-Ty\|\\
&=\lambda_1\|x-y\|+\lambda_2\|\tilde{T}x-\tilde{T}y\|,\quad \forall x,y \in  \mathcal{X}.
\end{align*}
Applying Theorem \ref{OIDENTITYTHEOREM} and using the fact that `a map is bijective if and only if its translate is bijective', proof is complete.
\end{proof}
\begin{theorem}\label{IMPORTANTTHEOREM}
	Let $ \mathcal{X}$, $ \mathcal{Y}$ be  Banach spaces and $S\in \text{Lip}(\mathcal{X}, \mathcal{Y}) $ be invertible. Let     $ T : \mathcal{X}\rightarrow \mathcal{Y}$ be a map and there exist  $ \lambda_1,\lambda_2 \in \left [0, 1  \right )$ such that 
	\begin{align*}
	\|Tx-Ty-(Sx-Sy)\|\leq\lambda_1\|Sx-Sy\|+\lambda_2\|Tx-Ty\|,\quad \forall x,y \in  \mathcal{X}.
	\end{align*} 
	Then 
	\begin{enumerate}[\upshape(i)]
		\item $T$ is Lipschitz and 
		\begin{align*}
		\frac{1-\lambda_1}{1+\lambda_2}\|Sx-Sy\|\leq\|Tx-Ty\|\leq\frac{1+\lambda_1}{1-\lambda_2} \|Sx-Sy\|, \quad \forall x,y \in \mathcal{X}.
		\end{align*}
		\item $\alpha S-T$ is invertible for all $\alpha \in \left(-\infty, \frac{1-\lambda_1}{1+\lambda_2}\right)$.
		\item $T$ is invertible and 
		\begin{align*}
		\frac{1-\lambda_2}{1+\lambda_1}\frac{1}{\text{Lip}(S)}\|u-v\|\leq\|T^{-1}u-T^{-1}v\|\leq\frac{1+\lambda_2}{1-\lambda_1} \text{Lip}(S^{-1})\|u-v\|, \quad\forall u,v \in  \mathcal{Y}.
		\end{align*}
		\item We have 
		\begin{align*}
		\frac{1-\lambda_1}{1+\lambda_2} \text{Lip}(S)\leq\text{Lip}(T)\leq \frac{1+\lambda_1}{1-\lambda_2}\text{Lip}(S)  \quad \text{ and } \quad \frac{1-\lambda_2}{1+\lambda_1}\frac{1}{\text{Lip}(S)}\leq\text{Lip}(T^{-1})\leq\frac{1+\lambda_2}{1-\lambda_1}\text{Lip}(S^{-1}) . 
		\end{align*}
	\end{enumerate}
\end{theorem}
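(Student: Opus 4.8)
The plan is to reduce everything to Theorem \ref{OSECONDTHEOREM} by a translation argument, mirroring exactly how Theorem \ref{NOBASEPOINTFIRST} was obtained from Theorem \ref{OIDENTITYTHEOREM}. The crucial observation is that the hypothesis, as well as every quantity occurring in the four conclusions, depends on $S$ and $T$ only through the differences $Sx-Sy$ and $Tx-Ty$, which are insensitive to adding a constant to $S$ or to $T$. So I would recenter both maps at the origin without disturbing anything that matters.

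Concretely, I would set $\tilde{S}x\coloneqq Sx-S0$ and $\tilde{T}x\coloneqq Tx-T0$ for all $x\in\mathcal{X}$, so that $\tilde{S}0=\tilde{T}0=0$; thus $\tilde{S}\in\operatorname{Lip}_0(\mathcal{X},\mathcal{Y})$ and $\tilde{T}$ fixes the base point. First I would check that $\tilde{S}$ is still invertible, with $\tilde{S}^{-1}y=S^{-1}(y+S0)$ (note $\tilde{S}^{-1}0=0$), and that $\operatorname{Lip}(\tilde{S})=\operatorname{Lip}(S)=\|\tilde{S}\|_{\operatorname{Lip}_0}$ together with $\operatorname{Lip}(\tilde{S}^{-1})=\operatorname{Lip}(S^{-1})=\|\tilde{S}^{-1}\|_{\operatorname{Lip}_0}$; the identifications of the Lipschitz number with the $\operatorname{Lip}_0$-norm hold precisely because for an origin-fixing map both are the same supremum, while the passage between $\tilde{S}$ and $S$ is legitimate because only differences enter the supremum. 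Since $\tilde{S}x-\tilde{S}y=Sx-Sy$ and $\tilde{T}x-\tilde{T}y=Tx-Ty$, the perturbation inequality transfers verbatim to the pair $(\tilde{S},\tilde{T})$, so Theorem \ref{OSECONDTHEOREM} applies to it directly.

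It then remains to transport each conclusion back from $\tilde{T}$ to $T$. For (i) and (iv): $T=\tilde{T}+T0$ is a constant translate of $\tilde{T}$, hence Lipschitz, and since the displayed two-sided bounds involve only $\|Tx-Ty\|=\|\tilde{T}x-\tilde{T}y\|$ and $\|Sx-Sy\|=\|\tilde{S}x-\tilde{S}y\|$, they are literally the bounds of Theorem \ref{OSECONDTHEOREM}(i), and (iv) follows from (i) and (iii). For (iii): from $Tx=\tilde{T}x+T0$ one gets $T^{-1}y=\tilde{T}^{-1}(y-T0)$, whence $T^{-1}u-T^{-1}v=\tilde{T}^{-1}(u-T0)-\tilde{T}^{-1}(v-T0)$ with $\|(u-T0)-(v-T0)\|=\|u-v\|$; substituting into Theorem \ref{OSECONDTHEOREM}(iii) and rewriting $\|\tilde{S}\|_{\operatorname{Lip}_0},\|\tilde{S}^{-1}\|_{\operatorname{Lip}_0}$ as $\operatorname{Lip}(S),\operatorname{Lip}(S^{-1})$ yields the asserted estimates for $T^{-1}$.

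The one step demanding genuine care — and where I would watch for a sign slip — is (ii), because there the translate is not by a single fixed vector. Computing $(\alpha\tilde{S}-\tilde{T})x=(\alpha S-T)x-(\alpha S0-T0)$ shows that $\alpha\tilde{S}-\tilde{T}$ is the constant translate of $\alpha S-T$ by the vector $\alpha S0-T0$, which itself depends on $\alpha$. Invoking the elementary fact, already used in Theorem \ref{NOBASEPOINTFIRST}, that a map is bijective if and only if any constant translate of it is bijective, the invertibility of $\alpha\tilde{S}-\tilde{T}$ for $\alpha\in(-\infty,\tfrac{1-\lambda_1}{1+\lambda_2})$ supplied by Theorem \ref{OSECONDTHEOREM}(ii) upgrades at once to invertibility of $\alpha S-T$ on the same interval. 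Apart from this bookkeeping, no new analytic difficulty arises beyond what Theorem \ref{OSECONDTHEOREM} already provides.
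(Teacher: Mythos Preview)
Your argument is correct, but it follows a different reduction path from the paper's. The paper's proof is a one-liner: it sets $R\coloneqq TS^{-1}$ and observes that the argument of Theorem~\ref{OSECONDTHEOREM} goes through verbatim, now invoking Theorem~\ref{NOBASEPOINTFIRST} (the $S=I$ case without a base-point condition) in place of Theorem~\ref{OIDENTITYTHEOREM}. In other words, the paper reduces along the ``compose with $S^{-1}$'' axis to reach the identity case, whereas you reduce along the ``translate to fix the origin'' axis to reach the $\operatorname{Lip}_0$ case and then apply Theorem~\ref{OSECONDTHEOREM} directly. Both reductions are equally short once stated; the paper's route avoids the bookkeeping you had to do for part~(ii) (tracking the $\alpha$-dependent translate $\alpha S0-T0$), since with $R=TS^{-1}$ the statement $\alpha I_{\mathcal{Y}}-R$ invertible $\Leftrightarrow$ $\alpha S-T$ invertible is immediate from composing with the fixed bijection $S$. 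Your route, on the other hand, makes the passage from the $\operatorname{Lip}_0$-norms of Theorem~\ref{OSECONDTHEOREM} to the Lipschitz numbers here completely transparent, since $\tilde{S}$ and $\tilde{S}^{-1}$ are already base-point preserving and the two notions coincide for such maps.
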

\begin{proof}
Define	$R\coloneqq TS^{-1}$ and the proof is similar to proof of Theorem \ref{OSECONDTHEOREM}.
\end{proof}
Following two corollaries are motivated from \cite{HILDING}.
\begin{corollary}
Let $p\geq1$. 	Let $ \mathcal{X}$, $ \mathcal{Y}$ be  Banach spaces and $S\in \text{Lip}(\mathcal{X}, \mathcal{Y}) $ be invertible. Let     $ T : \mathcal{X}\rightarrow \mathcal{Y}$ be a map and there exist  $ \lambda_1,\lambda_2 \in \left [0, 1  \right )$ such that 
\begin{align}\label{PGREATER1INEQUALITY}
\|Tx-Ty-(Sx-Sy)\|\leq((\lambda_1\|Sx-Sy\|)^p+(\lambda_2\|Tx-Ty\|)^p)^\frac{1}{p},\quad \forall x,y \in  \mathcal{X}.
\end{align} 
Then 
\begin{enumerate}[\upshape(i)]
	\item $T$ is Lipschitz and 
	\begin{align*}
	\frac{1-\lambda_1}{1+\lambda_2}\|Sx-Sy\|\leq\|Tx-Ty\|\leq\frac{1+\lambda_1}{1-\lambda_2} \|Sx-Sy\|, \quad \forall x,y \in \mathcal{X}.
	\end{align*}
	\item $\alpha S-T$ is invertible for all $\alpha \in \left(-\infty, \frac{1-\lambda_1}{1+\lambda_2}\right)$.
	\item $T$ is invertible and 
	\begin{align*}
	\frac{1-\lambda_2}{1+\lambda_1}\frac{1}{\text{Lip}(S)}\|u-v\|\leq\|T^{-1}u-T^{-1}v\|\leq\frac{1+\lambda_2}{1-\lambda_1} \text{Lip}(S^{-1})\|u-v\|, \quad\forall u,v \in  \mathcal{Y}.
	\end{align*}
	\item We have 
	\begin{align*}
	\frac{1-\lambda_1}{1+\lambda_2} \text{Lip}(S)\leq\text{Lip}(T)\leq \frac{1+\lambda_1}{1-\lambda_2}\text{Lip}(S)  \quad \text{ and } \quad \frac{1-\lambda_2}{1+\lambda_1}\frac{1}{\text{Lip}(S)}\leq\text{Lip}(T^{-1})\leq\frac{1+\lambda_2}{1-\lambda_1}\text{Lip}(S^{-1}) . 
	\end{align*}
\end{enumerate}	
\end{corollary}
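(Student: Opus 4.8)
The plan is to reduce the hypothesis (\ref{PGREATER1INEQUALITY}) to the linear-in-norms hypothesis of Theorem \ref{IMPORTANTTHEOREM}, after which all four conclusions transfer verbatim. The entire argument rests on a single elementary fact about $\ell^p$ norms, so this corollary is really just a special case of the preceding theorem dressed up in an $\ell^p$ aggregate.

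First I would record the inequality that for $p\geq 1$ and nonnegative reals $a,b$ one has
\[
(a^p+b^p)^{1/p}\leq a+b.
\]
This is nothing but the monotonicity of the finite-dimensional $\ell^p$ norms, namely $\|(a,b)\|_p\leq\|(a,b)\|_1$ whenever $p\geq 1$. It can also be checked directly: by homogeneity one may assume $a+b=1$, and then $0\leq a,b\leq 1$ with $p\geq 1$ give $a^p\leq a$ and $b^p\leq b$, so $a^p+b^p\leq a+b=1=(a+b)^p$; taking $p$-th roots yields the claim.

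Next I would apply this with $a=\lambda_1\|Sx-Sy\|$ and $b=\lambda_2\|Tx-Ty\|$, so that the right-hand side of (\ref{PGREATER1INEQUALITY}) is dominated by $\lambda_1\|Sx-Sy\|+\lambda_2\|Tx-Ty\|$. Consequently
\[
\|Tx-Ty-(Sx-Sy)\|\leq\lambda_1\|Sx-Sy\|+\lambda_2\|Tx-Ty\|,\quad\forall x,y\in\mathcal{X}.
\]
This is exactly the hypothesis of Theorem \ref{IMPORTANTTHEOREM} with the same $S$, $T$, $\lambda_1$, and $\lambda_2$. Invoking that theorem immediately delivers conclusions (i)--(iv) with identical constants, completing the proof.

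I do not expect any genuine obstacle here: the content is simply that the $\ell^p$-aggregated perturbation bound is never weaker than the $\ell^1$ bound when $p\geq 1$, so the corollary falls out of Theorem \ref{IMPORTANTTHEOREM} by a one-line domination. The only point deserving a moment's care is the direction of the norm inequality, since one needs $\|\cdot\|_p\leq\|\cdot\|_1$ (valid precisely for $p\geq 1$) rather than the reverse; this is exactly why the hypothesis restricts to $p\geq 1$.
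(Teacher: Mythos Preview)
Your proposal is correct and follows essentially the same approach as the paper: both invoke the elementary inequality $(r^p+s^p)^{1/p}\leq r+s$ for $p\geq 1$ to reduce hypothesis~(\ref{PGREATER1INEQUALITY}) to the hypothesis of Theorem~\ref{IMPORTANTTHEOREM}, and then cite that theorem for conclusions (i)--(iv). Your write-up even adds a brief justification of the norm inequality, which the paper simply states.
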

\begin{proof}
Note that if $r,s\geq0 $, then 
\begin{align*}
(r^p+s^p)^\frac{1}{p}\leq r+s \quad \text{if} \quad p\geq 1.
\end{align*}
Hence Inequality (\ref{PGREATER1INEQUALITY}) gives 
\begin{align*}
\|Tx-Ty-(Sx-Sy)\|&\leq((\lambda_1\|Sx-Sy\|)^p+(\lambda_2\|Tx-Ty\|)^p)^\frac{1}{p}\\
&\leq \lambda_1\|Sx-Sy\|+\lambda_2\|Tx-Ty\|,\quad \forall x,y \in  \mathcal{X}.
\end{align*}
Result follows by applying  Theorem \ref{IMPORTANTTHEOREM}.
\end{proof}
\begin{corollary}
	Let $0<p<1$. 	Let $ \mathcal{X}$, $ \mathcal{Y}$ be  Banach spaces and $S\in \text{Lip}(\mathcal{X}, \mathcal{Y}) $ be invertible. Let     $ T : \mathcal{X}\rightarrow \mathcal{Y}$ be a map and there exist  $ \lambda_1,\lambda_2 \in \left [0 ,2^{1-\frac{1}{p}}  \right )$ such that  
	\begin{align*}
	\|Tx-Ty-(Sx-Sy)\|\leq((\lambda_1\|Sx-Sy\|)^p+(\lambda_2\|Tx-Ty\|)^p)^\frac{1}{p},\quad \forall x,y \in  \mathcal{X}.
	\end{align*} 
	Then 
	\begin{enumerate}[\upshape(i)]
		\item $T$ is Lipschitz and 
		\begin{align*}
		\frac{1-2^{\frac{1}{p}-1}\lambda_1}{1+2^{\frac{1}{p}-1}\lambda_2}\|Sx-Sy\|\leq\|Tx-Ty\|\leq\frac{1+2^{\frac{1}{p}-1}\lambda_1}{1-2^{\frac{1}{p}-1}\lambda_2} \|Sx-Sy\|, \quad \forall x,y \in \mathcal{X}.
		\end{align*}
		\item $\alpha S-T$ is invertible for all $\alpha \in \left(-\infty, \frac{1-2^{\frac{1}{p}-1}\lambda_1}{1+2^{\frac{1}{p}-1}\lambda_2}\right)$.
		\item $T$ is invertible and 
		\begin{align*}
		\frac{1-2^{\frac{1}{p}-1}\lambda_2}{1+2^{\frac{1}{p}-1}\lambda_1}\frac{1}{\text{Lip}(S)}\|u-v\|\leq\|T^{-1}u-T^{-1}v\|\leq\frac{1+2^{\frac{1}{p}-1}\lambda_2}{1-2^{\frac{1}{p}-1}\lambda_1} \text{Lip}(S^{-1})\|u-v\|, \quad\forall u,v \in  \mathcal{Y}.
		\end{align*}
		\item We have 
		\begin{align*}
		\frac{1-2^{\frac{1}{p}-1}\lambda_1}{1+2^{\frac{1}{p}-1}\lambda_2} \text{Lip}(S)\leq\text{Lip}(T)\leq \frac{1+2^{\frac{1}{p}-1}\lambda_1}{1-2^{\frac{1}{p}-1}\lambda_2}\text{Lip}(S)  \quad \text{ and } \\
		 \frac{1-2^{\frac{1}{p}-1}\lambda_2}{1+2^{\frac{1}{p}-1}\lambda_1}\frac{1}{\text{Lip}(S)}\leq\text{Lip}(T^{-1})\leq\frac{1+2^{\frac{1}{p}-1}\lambda_2}{1-2^{\frac{1}{p}-1}\lambda_1}\text{Lip}(S^{-1}) . 
		\end{align*}
	\end{enumerate}	
\end{corollary}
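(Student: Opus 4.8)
The plan is to reduce the statement to Theorem \ref{IMPORTANTTHEOREM} exactly as in the preceding corollary; the only new ingredient is that for $0<p<1$ the elementary inequality relating the $\ell^p$ and $\ell^1$ combinations runs in the opposite direction and carries a multiplicative constant. First I would establish the numerical inequality
\begin{align*}
(r^p+s^p)^{\frac{1}{p}}\leq 2^{\frac{1}{p}-1}(r+s), \quad \forall r,s\geq 0, \quad 0<p<1.
\end{align*}
This follows from the power-mean comparison: writing the normalized means $M_p(r,s)=\left(\frac{r^p+s^p}{2}\right)^{\frac{1}{p}}$ and $M_1(r,s)=\frac{r+s}{2}$, monotonicity of power means in the exponent gives $M_p\leq M_1$ because $p<1$, and multiplying through by $2^{\frac{1}{p}}$ yields the claim. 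Equivalently it is a direct consequence of the convexity of $t\mapsto t^{\frac{1}{p}}$, since the exponent $\frac{1}{p}$ exceeds $1$ when $0<p<1$.

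With this in hand I would apply the inequality with $r=\lambda_1\|Sx-Sy\|$ and $s=\lambda_2\|Tx-Ty\|$, so that the hypothesis becomes
\begin{align*}
\|Tx-Ty-(Sx-Sy)\|\leq 2^{\frac{1}{p}-1}\lambda_1\|Sx-Sy\|+2^{\frac{1}{p}-1}\lambda_2\|Tx-Ty\|, \quad \forall x,y\in\mathcal{X}.
\end{align*}
Setting $\mu_i\coloneqq 2^{\frac{1}{p}-1}\lambda_i$ for $i=1,2$ recasts the problem into the exact form treated by Theorem \ref{IMPORTANTTHEOREM}.

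It then remains only to verify the admissibility constraint $\mu_1,\mu_2\in[0,1)$, and this is precisely where the stated range $\lambda_1,\lambda_2\in[0,2^{1-\frac{1}{p}})$ is used: from $\lambda_i<2^{1-\frac{1}{p}}$ one obtains $\mu_i=2^{\frac{1}{p}-1}\lambda_i<2^{\frac{1}{p}-1}\cdot 2^{1-\frac{1}{p}}=1$, while $\mu_i\geq 0$ is immediate. Invoking Theorem \ref{IMPORTANTTHEOREM} with $\lambda_1,\lambda_2$ replaced by $\mu_1,\mu_2$ then delivers parts (i)--(iv) verbatim, the factors $2^{\frac{1}{p}-1}$ appearing throughout the conclusion being exactly those absorbed into $\mu_1$ and $\mu_2$.

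I do not anticipate any real obstacle beyond recording the correct constant in the power-mean inequality; once the sharp value $2^{\frac{1}{p}-1}$ is pinned down, the reduction to Theorem \ref{IMPORTANTTHEOREM} is automatic, and the hypothesis $\lambda_1,\lambda_2\in[0,2^{1-\frac{1}{p}})$ is calibrated so that the transformed constants land in $[0,1)$, which is exactly what that theorem requires.
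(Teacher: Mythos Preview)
Your proposal is correct and follows essentially the same route as the paper: both use the elementary inequality $(r^p+s^p)^{1/p}\leq 2^{\frac{1}{p}-1}(r+s)$ for $0<p<1$ to reduce the hypothesis to the standard form of Theorem \ref{IMPORTANTTHEOREM} with constants $2^{\frac{1}{p}-1}\lambda_i$. Your write-up is in fact slightly more complete, since you justify the numerical inequality via power-mean monotonicity and explicitly verify that the transformed constants lie in $[0,1)$, whereas the paper simply states the inequality and invokes Theorem \ref{IMPORTANTTHEOREM}.
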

\begin{proof}
Note that if $r,s\geq0 $, then 
	\begin{align*}
	 (r^p+s^p)^\frac{1}{p}\leq 2^{\frac{1}{p}-1}(r+s) \quad \text{if} \quad p< 1.
	\end{align*}
Hence Inequality (\ref{PGREATER1INEQUALITY}) gives 
\begin{align*}
\|Tx-Ty-(Sx-Sy)\|&\leq((\lambda_1\|Sx-Sy\|)^p+(\lambda_2\|Tx-Ty\|)^p)^\frac{1}{p}\\
&\leq  2^{\frac{1}{p}-1}\lambda_1\|Sx-Sy\|+ 2^{\frac{1}{p}-1}\lambda_2\|Tx-Ty\|,\quad \forall x,y \in  \mathcal{X}.
\end{align*}
Result follows by applying  Theorem \ref{IMPORTANTTHEOREM}.	
\end{proof}
Next we generalize Corollary 1 in \cite{CASAZZACHRISTENSEN}.
\begin{corollary}
 	Let $ \mathcal{X}$, $ \mathcal{Y}$ be  Banach spaces and $S\in \text{Lip}(\mathcal{X}, \mathcal{Y}) $ be invertible. Let     $ T : \mathcal{X}\rightarrow \mathcal{Y}$ be a  map and there exists  $ \lambda \in \left [0, 1  \right )$ such that 
\begin{align*}
\|Tx-Ty-(Sx-Sy)\|\leq\lambda\|Sx-Sy\|+\|Tx-Ty\|,\quad \forall x,y \in  \mathcal{X}.
\end{align*} 
Then $T$ is Lipschitz, invertible and 
\begin{align*}
 \text{Lip}(T^{-1})\leq\frac{2}{1-\lambda}\text{Lip}(S^{-1}) . 
	\end{align*}
\end{corollary}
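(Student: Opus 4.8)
The plan is to extract the statement directly from the displayed inequality and the already-established perturbation machinery, treating $\lambda_1=\lambda$ and the coefficient of $\|Tx-Ty\|$ as the boundary value $\lambda_2=1$. The lower estimate falls out immediately: since $S$ is bi-Lipschitz, from
\[
\|Tx-Ty\| \geq \|Sx-Sy\| - \|Tx-Ty-(Sx-Sy)\| \geq (1-\lambda)\|Sx-Sy\| - \|Tx-Ty\|
\]
one gets $\|Tx-Ty\| \geq \tfrac{1-\lambda}{2}\|Sx-Sy\|$ for all $x,y$, which already shows $T$ is injective. Once invertibility is in hand, this same bound yields the claimed constant: combining it with the bi-Lipschitz estimate $\|x-y\| \leq \text{Lip}(S^{-1})\|Sx-Sy\|$ and substituting $x=T^{-1}u$, $y=T^{-1}v$ gives $\text{Lip}(T^{-1}) \leq \tfrac{2}{1-\lambda}\text{Lip}(S^{-1})$ with no further work. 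So the only real content is invertibility and Lipschitzness of $T$.

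To reach those, the obstruction is that the coefficient of $\|Tx-Ty\|$ equals $1$, so Theorem \ref{IMPORTANTTHEOREM}, which demands $\lambda_2\in[0,1)$, cannot be applied as stated. The plan to remove the boundary is a Guo-type absorption in the spirit of Theorem \ref{GUOTHEOREM}: fix a small $\varepsilon>0$, split $\|Tx-Ty\| = (1-\varepsilon)\|Tx-Ty\| + \varepsilon\|Tx-Ty\|$, and bound the last term by $\varepsilon\,\text{Lip}(TS^{-1})\|Sx-Sy\|$ via $\|Tx-Ty\|=\|TS^{-1}(Sx)-TS^{-1}(Sy)\|$. The hypothesis then reads
\[
\|Tx-Ty-(Sx-Sy)\| \leq \bigl(\lambda+\varepsilon\,\text{Lip}(TS^{-1})\bigr)\|Sx-Sy\| + (1-\varepsilon)\|Tx-Ty\|,
\]
whose coefficients $\lambda_1'=\lambda+\varepsilon\,\text{Lip}(TS^{-1})$ and $\lambda_2'=1-\varepsilon$ both lie in $[0,1)$ for $\varepsilon$ small. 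Theorem \ref{IMPORTANTTHEOREM} then makes $T$ invertible and gives $\text{Lip}(T^{-1}) \leq \tfrac{2-\varepsilon}{\,1-\lambda-\varepsilon\,\text{Lip}(TS^{-1})\,}\,\text{Lip}(S^{-1})$; letting $\varepsilon\downarrow 0$ recovers exactly $\text{Lip}(T^{-1}) \leq \tfrac{2}{1-\lambda}\text{Lip}(S^{-1})$, which is why the constant degrades from the generic $\tfrac{1+\lambda_2}{1-\lambda_1}$ to the limiting value $2$.

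I expect the main obstacle to be precisely the step that makes this absorption legitimate, namely that $\text{Lip}(TS^{-1})<\infty$, i.e. that $T$ is Lipschitz — which is part of what must be proved. Unlike the lower bound, an upper bound $\|Tx-Ty\|\leq C\|Sx-Sy\|$ does not follow from the hypothesis, since the triangle inequality only produces the vacuous $\|Tx-Ty\|\leq \lambda\|Sx-Sy\|+\|Tx-Ty\|+\|Sx-Sy\|$; the coefficient-$1$ term cannot be controlled by $S$ on its own. Thus the crux is establishing Lipschitzness of $T$, and I would expect to need either the a priori assumption that $T$ is Lipschitz (exactly as Guo assumes boundedness in Theorem \ref{GUOTHEOREM}) or additional structure coming from the intended application, after which the absorption-and-limit scheme above closes the argument and the $\text{Lip}(T^{-1})$ bound follows as in the first paragraph.
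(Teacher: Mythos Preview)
Your absorption--and--limit scheme is exactly the paper's argument: it sets $R=TS^{-1}$, writes
\[
\|Ru-Rv-(u-v)\|\leq(\lambda+\varepsilon\,\text{Lip}(R))\|u-v\|+(1-\varepsilon)\|Ru-Rv\|,
\]
invokes Theorem~\ref{NOBASEPOINTFIRST} to get $\text{Lip}(R^{-1})\leq\frac{2-\varepsilon}{1-\lambda-\varepsilon\,\text{Lip}(R)}$, and lets $\varepsilon\downarrow 0$. Your direct route to the constant in the first paragraph---reading $\|Tx-Ty\|\geq\frac{1-\lambda}{2}\|Sx-Sy\|$ straight off the hypothesis and combining with $\text{Lip}(S^{-1})$---is a mild simplification over the paper's limiting step, but the overall strategy is the same.

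More importantly, the gap you flag is genuine, and the paper does not close it: it simply writes ``Note that $\text{Lip}(R)\neq 0$'' and proceeds to use $\text{Lip}(R)$ as a finite number in the choice of $\varepsilon$ and in the coefficient $\lambda+\varepsilon\,\text{Lip}(R)$, without ever arguing that $\text{Lip}(R)<\infty$. In fact the conclusion ``$T$ is Lipschitz'' (and even ``$T$ is invertible'') is \emph{false} under the stated hypotheses. Take $\mathcal{X}=\mathcal{Y}=\mathbb{R}$, $S=I_\mathbb{R}$, $\lambda=0$, and
\[
T(x)=\begin{cases} x & x<0,\\ x+1 & x\geq 0.\end{cases}
\]
A short case check shows $|Tx-Ty-(x-y)|\leq|Tx-Ty|$ for all $x,y\in\mathbb{R}$ (the only nontrivial case is $x\geq 0>y$, where the left side equals $1$ and the right side equals $(x-y)+1\geq 1$). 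Yet $T$ has a jump at $0$, so it is not Lipschitz, and its range misses $[0,1)$, so it is not surjective. Thus the corollary as stated is incorrect; one must add the hypothesis that $T$ is Lipschitz, exactly as you anticipated and exactly as Theorems~\ref{GUOTHEOREM} and \ref{LASTTHEOREM} do. With that hypothesis in place, your argument and the paper's both go through verbatim.
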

\begin{proof}
Define $R\coloneqq TS^{-1}$. Then 
\begin{align*}
\|Ru-Rv-(u-v)\|\leq \lambda \|u-v\|+\|Ru-Rv\|, \quad \forall u,v \in  \mathcal{Y}.
\end{align*}
Note that   $\text{Lip}(R)\neq 0$. Let 
\begin{align*}
0<\varepsilon < \min \left\{1, \frac{1-\lambda}{\text{Lip}(R)}\right\}.
\end{align*}
Define 
$\lambda_1 \coloneqq \lambda -\varepsilon \text{Lip}(R)$ and $\lambda_2 \coloneqq 1-\varepsilon$. Then  $ \lambda_1,\lambda_2 \in \left [0, 1  \right )$ and 
\begin{align*}
\|Ru-Rv-(u-v)\|&\leq \lambda \|u-v\|+\|Ru-Rv\|\\
&\leq \lambda \|u-v\|+\|Ru-Rv\|+\varepsilon (\text{Lip}(R)\|u-v\|- \|Ru-Rv\|)\\
&=(\lambda+\varepsilon \text{Lip}(R))\|u-v\|+(1-\varepsilon)\|Ru-Rv\| , \quad \forall u,v \in  \mathcal{Y}.
\end{align*}
By applying Theorem \ref{NOBASEPOINTFIRST} we get that $R$ is Lipschitz, invertible  and 
\begin{align*}
\text{Lip}(R^{-1})\leq\frac{2-\varepsilon}{1-(\lambda+\varepsilon \text{Lip}(R))}.
\end{align*}
Since $\varepsilon$ can be made arbitrarily small, we must have 
\begin{align*}
\text{Lip}(R^{-1})\leq\frac{2}{1-\lambda}.
\end{align*}
Substituting the expression of $R$ gives 
\begin{align*}
\frac{1}{\text{Lip}(S^{-1})}\text{Lip}(T^{-1})\leq \text{Lip}(ST^{-1})=\text{Lip}(R^{-1})\leq\frac{2}{1-\lambda}.
\end{align*}
\end{proof}
We finally derive the following  non-linear version of Theorem \ref{GUOTHEOREM}. 
\begin{theorem}\label{LASTTHEOREM}
Let $ \mathcal{X}$, $ \mathcal{Y}$ be  Banach spaces and $S\in \text{Lip}(\mathcal{X}, \mathcal{Y}) $ be invertible. Let     $ T : \mathcal{X}\rightarrow \mathcal{Y}$ be a Lipschitz map and there exist  $  \lambda_1 \in \left [0, 1  \right )$ and $  \lambda_2 \in \left [0, 1  \right ]$ such that 
	\begin{align*}
	\|Tx-Ty-(Sx-Sy)\|\leq\lambda_1\|Sx-Sy\|+\lambda_2\|Tx-Ty\|,\quad \forall x,y \in  \mathcal{X}.
	\end{align*} 
	Then $T$ is Lipschitz invertible.  Further,   for every $\varepsilon>0$ satisfying $1>\lambda_2-\varepsilon>0$ and $\lambda_1+\varepsilon \text{Lip}(TS^{-1})<1$, we have
	\begin{enumerate}[\upshape(i)]
		\item 
		\begin{align*}
		\frac{1-\lambda_1-\varepsilon \text{Lip}(TS^{-1})}{1+\lambda_2-\varepsilon}\|Sx-Sy\|\leq\|Tx-Ty\|\leq\frac{1+\lambda_1+\varepsilon \text{Lip}(TS^{-1})}{1-\lambda_2+\varepsilon} \|Sx-Sy\|, \quad \forall x,y \in \mathcal{X}.
		\end{align*}
		\item $\alpha S-T$ is invertible for all $\alpha \in \left(-\infty, \frac{1-\lambda_1-\varepsilon \text{Lip}(TS^{-1})}{1+\lambda_2-\varepsilon}\right)$.
		\item $T$ is invertible and 
		\begin{align*}
		\frac{1-\lambda_2+\varepsilon}{1+\lambda_1+\varepsilon \text{Lip}(TS^{-1})}\frac{1}{\text{Lip}(S)}\|u-v\|\leq\|T^{-1}u-T^{-1}v\|\leq\frac{1+\lambda_2-\varepsilon}{1-\lambda_1-\varepsilon \text{Lip}(TS^{-1})} \text{Lip}(S^{-1})\|u-v\|, \quad\forall u,v \in  \mathcal{Y}.
		\end{align*}
		\item We have 
		\begin{align*}
		&\frac{1-\lambda_1-\varepsilon \text{Lip}(TS^{-1})}{1+\lambda_2-\varepsilon} \text{Lip}(S)\leq\text{Lip}(T)\leq \frac{1+\lambda_1+\varepsilon \text{Lip}(TS^{-1})}{1-\lambda_2+\varepsilon}\text{Lip}(S)  \quad  \text{ and } \\
		 & \frac{1-\lambda_2+\varepsilon}{1+\lambda_1+\varepsilon \text{Lip}(TS^{-1})}\frac{1}{\text{Lip}(S)}\leq\text{Lip}(T^{-1})\leq\frac{1+\lambda_2-\varepsilon}{1-\lambda_1-\varepsilon \text{Lip}(TS^{-1})}\text{Lip}(S^{-1}) . 
		\end{align*}
	\end{enumerate}
\end{theorem}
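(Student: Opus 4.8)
The plan is to reduce the statement to the already-established base-point-free Theorem~\ref{NOBASEPOINTFIRST} via the substitution $R:=TS^{-1}$ used in Theorems~\ref{OSECONDTHEOREM} and~\ref{IMPORTANTTHEOREM}, and to absorb the borderline value $\lambda_2=1$ by an $\varepsilon$-perturbation in the spirit of the preceding corollary. First I would set $R:=TS^{-1}\colon\mathcal{Y}\to\mathcal{Y}$; since both $T$ and $S^{-1}$ are Lipschitz, $R$ is Lipschitz with $\text{Lip}(R)=\text{Lip}(TS^{-1})<\infty$. Substituting $u=Sx$, $v=Sy$ into the hypothesis turns it into
\[
\|Ru-Rv-(u-v)\|\leq\lambda_1\|u-v\|+\lambda_2\|Ru-Rv\|,\quad \forall u,v\in\mathcal{Y}.
\]
Because $\lambda_2$ is only assumed to lie in $[0,1]$, I cannot feed this directly into Theorem~\ref{NOBASEPOINTFIRST}, whose hypothesis requires a coefficient strictly below $1$.

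The key step is the perturbation. Fix $\varepsilon>0$ with $1>\lambda_2-\varepsilon>0$ and $\lambda_1+\varepsilon\,\text{Lip}(R)<1$. Since $\|Ru-Rv\|\leq\text{Lip}(R)\|u-v\|$, the quantity $\varepsilon(\text{Lip}(R)\|u-v\|-\|Ru-Rv\|)$ is nonnegative, and adding it to the right-hand side above yields
\[
\|Ru-Rv-(u-v)\|\leq(\lambda_1+\varepsilon\,\text{Lip}(R))\|u-v\|+(\lambda_2-\varepsilon)\|Ru-Rv\|,\quad \forall u,v\in\mathcal{Y}.
\]
The new coefficients $\lambda_1':=\lambda_1+\varepsilon\,\text{Lip}(R)$ and $\lambda_2':=\lambda_2-\varepsilon$ both lie in $[0,1)$ by the choice of $\varepsilon$, so Theorem~\ref{NOBASEPOINTFIRST} applies to $R$ and furnishes the two-sided Lipschitz estimate, the resolvent statement, the invertibility of $R$, and the corresponding bounds on $\text{Lip}(R^{-1})$.

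Finally I would translate these four conclusions back to $T$ exactly as in the proof of Theorem~\ref{OSECONDTHEOREM}: the identity $\|R(Sx)-R(Sy)\|=\|Tx-Ty\|$ converts the estimate on $R$ into (i); the factorisation $\alpha I_\mathcal{Y}-R=(\alpha S-T)S^{-1}$ with $S$ invertible converts the resolvent statement into (ii); the relation $T^{-1}=S^{-1}R^{-1}$ together with the Lipschitz bounds $\text{Lip}(S)$ and $\text{Lip}(S^{-1})$ gives (iii); and (iv) is immediate from (i) and (iii). Substituting $\lambda_1'=\lambda_1+\varepsilon\,\text{Lip}(TS^{-1})$ and $\lambda_2'=\lambda_2-\varepsilon$ into these bounds reproduces the stated inequalities verbatim. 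The only genuine difficulty is the borderline case $\lambda_2=1$, and it is precisely this case that forces the hypothesis that $T$ itself be Lipschitz: without a finite $\text{Lip}(TS^{-1})$ the perturbation term $\varepsilon\,\text{Lip}(TS^{-1})$ would be meaningless, whereas its finiteness is exactly what allows the $\varepsilon$-trick to push $\lambda_2$ strictly below $1$ while keeping the perturbed coefficient $\lambda_1'$ below $1$.
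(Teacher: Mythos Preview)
Your proposal is correct and follows essentially the same route as the paper: define $R=TS^{-1}$, use the $\varepsilon$-shift $\lambda_2\mapsto\lambda_2-\varepsilon$, $\lambda_1\mapsto\lambda_1+\varepsilon\,\text{Lip}(R)$ via $\|Ru-Rv\|\le\text{Lip}(R)\|u-v\|$, and then invoke the earlier result and translate back as in Theorem~\ref{OSECONDTHEOREM}. Your reference to Theorem~\ref{NOBASEPOINTFIRST} rather than Theorem~\ref{OIDENTITYTHEOREM} is in fact the cleaner choice, since no base-point condition on $T$ is assumed here.
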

\begin{proof}
Define	$R\coloneqq TS^{-1}$. Then for every $\varepsilon>0$ satisfying $1>\lambda_2-\varepsilon>0$ and $\lambda_1+\varepsilon \text{Lip}(TS^{-1})<1$, 
\begin{align*}
\|Ru-Rv-(u-v)\|&\leq \lambda_1\|u-v\|+\lambda_2\|Ru-Rv\|\\
&=\lambda_1\|u-v\|+(\lambda_2-\varepsilon)\|Ru-Rv\|+\varepsilon\|Ru-Rv\|\\
&\leq \lambda_1\|u-v\|+(\lambda_2-\varepsilon)\|Ru-Rv\|+\varepsilon \text{Lip}(R) \|u-v\|\\
&= (\lambda_1+\varepsilon \text{Lip}(R))\|u-v\|+(\lambda_2-\varepsilon)\|Ru-Rv\|, \quad \forall u,v \in  \mathcal{Y}.
\end{align*}
 Remaining parts of  the proof is similar to the proof of Theorem \ref{OSECONDTHEOREM}.	
\end{proof}
It is an easy observation that the constant $\lambda_1$ in Theorem \ref{LASTTHEOREM} can not be improved. We are therefore left with the following open problem.
\begin{question}
	Can  the constant $\lambda_2$ be improved further in Theorem  \ref{LASTTHEOREM}?
\end{question}

\section{Applications}
Our first two applications of Theorem \ref{IMPORTANTTHEOREM} are  easy proofs of Soderlind-Campanato perturbation and Barbagallo-Ernst-Thera perturbation.

\begin{theorem}\cite{CAMPANATO, SODERLIND}\label{SODERLINDCAMPANATOAPP} (\textbf{Soderlind-Campanato perturbation})
	Let $ \mathcal{X}$ be a real Banach space,   $ A : \mathcal{X}\rightarrow \mathcal{X}$ be a map  and there exist  $\alpha>0, 0\leq \beta <1$ such that 
	\begin{align*} 
	\|\alpha Ax-\alpha Ay-(x-y)\|\leq\beta\|x-y\|,\quad \forall x,y \in  \mathcal{X}.
	\end{align*} 
	Then $A$ is Lipschitz, invertible and $\text{Lip}(A^{-1})\leq \frac{\alpha}{1-\beta}$.	
\end{theorem}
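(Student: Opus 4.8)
The plan is to realize this statement as a direct specialization of Theorem \ref{IMPORTANTTHEOREM}, which already delivers Lipschitzness, invertibility, and a bound on $\text{Lip}(A^{-1})$ once the hypothesis is cast into the form $\|Tx-Ty-(Sx-Sy)\|\le\lambda_1\|Sx-Sy\|+\lambda_2\|Tx-Ty\|$. The only real work is to choose $S$, $T$, $\lambda_1$, $\lambda_2$ correctly. Since the factor $\alpha$ multiplying $A$ is really just a rescaling of the comparison map, I would take $T\coloneqq A$ and $S\coloneqq\frac{1}{\alpha}I_\mathcal{X}$, so that the model term $Sx-Sy=\frac{1}{\alpha}(x-y)$ absorbs the factor $\alpha^{-1}$ rather than $T$. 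Here $S$ is a nonzero scalar multiple of the identity, hence an invertible element of $\text{Lip}(\mathcal{X},\mathcal{X})$ with $S^{-1}=\alpha I_\mathcal{X}$, $\text{Lip}(S)=\frac{1}{\alpha}$ and $\text{Lip}(S^{-1})=\alpha$.

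To verify the hypothesis of Theorem \ref{IMPORTANTTHEOREM}, I would simply divide the given inequality by $\alpha>0$, turning $\|\alpha Ax-\alpha Ay-(x-y)\|\le\beta\|x-y\|$ into $\|Ax-Ay-\frac{1}{\alpha}(x-y)\|\le\frac{\beta}{\alpha}\|x-y\|$. Recognizing $\frac{1}{\alpha}(x-y)=Sx-Sy$ and $\frac{\beta}{\alpha}\|x-y\|=\beta\|Sx-Sy\|$, this reads $\|Tx-Ty-(Sx-Sy)\|\le\beta\|Sx-Sy\|$, which is exactly the perturbation inequality with $\lambda_1\coloneqq\beta$ and $\lambda_2\coloneqq0$. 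Both constants lie in $[0,1)$ because $0\le\beta<1$, and although $\mathcal{X}$ is a real Banach space, this is permitted by the remark following Lemma \ref{FIRSTLEMMA}.

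Theorem \ref{IMPORTANTTHEOREM}(i) then yields that $T=A$ is Lipschitz, and part (iii) yields that $A$ is invertible with $\text{Lip}(A^{-1})\le\frac{1+\lambda_2}{1-\lambda_1}\text{Lip}(S^{-1})=\frac{1}{1-\beta}\cdot\alpha=\frac{\alpha}{1-\beta}$, which is precisely the asserted bound. I do not anticipate a genuine obstacle: the result is a clean corollary, and the only point demanding care is the bookkeeping of the scalar $\alpha$. One could equally take $T\coloneqq\alpha A$ and $S\coloneqq I_\mathcal{X}$; then $\text{Lip}(S^{-1})=1$ gives $\text{Lip}(T^{-1})\le\frac{1}{1-\beta}$, and since $T^{-1}y=A^{-1}(y/\alpha)$ one recovers $\text{Lip}(A^{-1})=\alpha\,\text{Lip}(T^{-1})\le\frac{\alpha}{1-\beta}$. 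Loading the scalar onto $S$ merely avoids this final rescaling step, so it is the more economical choice.
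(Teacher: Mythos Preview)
Your proposal is correct and takes essentially the same approach as the paper: both derive the result as an immediate specialization of Theorem \ref{IMPORTANTTHEOREM} with $\lambda_1=\beta$ and $\lambda_2=0$. The only cosmetic difference is that the paper loads the scalar onto $T$ (setting $T=\alpha A$, $S=I_\mathcal{X}$) rather than onto $S$, which is precisely the alternative you describe in your final paragraph; either way the bound $\text{Lip}(A^{-1})\le\frac{\alpha}{1-\beta}$ drops out in one line.
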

\begin{proof}
	Set $T=	\alpha A$ and $\lambda_1=\beta$ in Theorem \ref{IMPORTANTTHEOREM}. Then $\frac{1}{\alpha}\text{Lip}(A^{-1})=\text{Lip}(T^{-1})\leq \frac{1}{1-\lambda_1}= \frac{1}{1-\beta}$.	
\end{proof}
\begin{theorem}\cite{BARBAGALLO}\label{BARBAGALLOAPP} (\textbf{Barbagallo-Ernst-Thera perturbation})
	Let $ \mathcal{X}$ be a real Banach space,   $ A : \mathcal{X}\rightarrow \mathcal{X}$ be a map  and there exist  $\alpha>0, 0\leq \beta <1$ such that 
	\begin{align} \label{BET}
	\| Ax- Ay-(\alpha x-\alpha y)\|\leq\beta \|Ax-Ay\|,\quad \forall x,y \in  \mathcal{X}.
	\end{align} 
	Then 
	\begin{enumerate}[\upshape(i)]
		\item If $\beta<1/2, $ then $A$ is Lipschitz, invertible and $\text{Lip}(A^{-1})\leq \frac{1-\beta}{\alpha(1-2\beta)}$.
		\item If $ \mathcal{X}$ is a Hilbert space, then $A$ is Lipschitz, invertible and $\text{Lip}(A^{-1})\leq \frac{1+\beta}{\alpha}$. 
	\end{enumerate}
\end{theorem}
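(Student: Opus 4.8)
The plan is to reduce both parts to Theorem \ref{IMPORTANTTHEOREM} by taking $S \coloneqq \alpha I_{\mathcal{X}}$, which is a linear, invertible, Lipschitz map with $\text{Lip}(S) = \alpha$ and $\text{Lip}(S^{-1}) = 1/\alpha$, together with $T \coloneqq A$. The two stated bounds will arise from two different ways of matching Inequality (\ref{BET}) to the hypothesis $\|Tx-Ty-(Sx-Sy)\| \leq \lambda_1\|Sx-Sy\| + \lambda_2\|Tx-Ty\|$ of that theorem, so the only real work is choosing the pair $(\lambda_1,\lambda_2)$ correctly in each case.

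First I would read off the elementary upper Lipschitz estimate for $A$ straight from (\ref{BET}): by the triangle inequality $\|Ax-Ay\| \leq \|Ax-Ay-\alpha(x-y)\| + \alpha\|x-y\| \leq \beta\|Ax-Ay\| + \alpha\|x-y\|$, and since $1-\beta>0$ this gives
\[
\|Ax-Ay\| \leq \frac{\alpha}{1-\beta}\|x-y\|, \qquad \forall x,y \in \mathcal{X}.
\]
This bound is what lets me eliminate the $\|Ax-Ay\|$ term on the right of (\ref{BET}) in part (i).

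For part (i), assuming $\beta < 1/2$, I would substitute the estimate above into (\ref{BET}) to obtain
\[
\|Ax-Ay-(Sx-Sy)\| \leq \beta\|Ax-Ay\| \leq \frac{\beta}{1-\beta}\,\alpha\|x-y\| = \frac{\beta}{1-\beta}\|Sx-Sy\|,
\]
which is the hypothesis of Theorem \ref{IMPORTANTTHEOREM} with $\lambda_1 = \frac{\beta}{1-\beta}$ and $\lambda_2 = 0$. The constraint $\beta<1/2$ is used exactly here, since it is equivalent to $\lambda_1 \in [0,1)$. Part (iii) of Theorem \ref{IMPORTANTTHEOREM} then makes $A$ invertible with $\text{Lip}(A^{-1}) \leq \frac{1+\lambda_2}{1-\lambda_1}\text{Lip}(S^{-1}) = \frac{1}{\,1-\frac{\beta}{1-\beta}\,}\cdot\frac{1}{\alpha} = \frac{1-\beta}{\alpha(1-2\beta)}$, which is the asserted constant.

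For part (ii), I would instead leave (\ref{BET}) untouched: it is already the hypothesis of Theorem \ref{IMPORTANTTHEOREM} with $\lambda_1 = 0$ and $\lambda_2 = \beta \in [0,1)$, because $\|Ax-Ay-(Sx-Sy)\| \leq \beta\|Tx-Ty\|$. Part (iii) of the same theorem then gives at once $\text{Lip}(A^{-1}) \leq \frac{1+\beta}{1}\cdot\frac{1}{\alpha} = \frac{1+\beta}{\alpha}$. I expect the one point worth flagging to be that this second argument never invokes an inner product, so the Hilbert space hypothesis in the statement is in fact redundant and the bound $\frac{1+\beta}{\alpha}$ holds over an arbitrary Banach space, with the stated theorem recovered as the special case. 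Accordingly, there is no genuine analytic obstacle here — the whole proof is bookkeeping around Theorem \ref{IMPORTANTTHEOREM} — and the only thing to get right is that the transfer trick of part (i) costs the condition $\beta<1/2$, whereas the direct reading in part (ii) is unconditional in $\beta \in [0,1)$.
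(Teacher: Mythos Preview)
Your argument is correct, and for part (ii) it coincides with the paper's: both read Inequality~(\ref{BET}) directly as the hypothesis of Theorem~\ref{IMPORTANTTHEOREM} with $\lambda_1=0$, $\lambda_2=\beta$, obtaining $\text{Lip}(A^{-1})\leq\frac{1+\beta}{\alpha}$ without any use of an inner product (so, as you note, the Hilbert hypothesis is superfluous). Where you diverge is in part (i): you first bound $\|Ax-Ay\|$ by $\frac{\alpha}{1-\beta}\|x-y\|$, feed this back into (\ref{BET}) to manufacture the pair $\lambda_1=\frac{\beta}{1-\beta}$, $\lambda_2=0$, and then invoke Theorem~\ref{IMPORTANTTHEOREM} afresh. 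The paper instead reuses the \emph{same} reduction as in (ii) and simply observes the elementary inequality $1+\beta\leq\frac{1-\beta}{1-2\beta}$ for $\beta<1/2$, so that the bound from (ii) already implies the weaker bound claimed in (i). The paper's route is shorter and makes transparent that the constant in (i) is never better than the one in (ii); your route, on the other hand, shows explicitly how the restriction $\beta<1/2$ arises as a genuine constraint on $\lambda_1$ if one insists on eliminating the $\|Tx-Ty\|$ term.
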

\begin{proof}
	Set $T=	\frac{1}{\alpha} A$ and $\lambda_2=\beta$ in Theorem \ref{IMPORTANTTHEOREM}. Then 
	\begin{enumerate}[\upshape(i)]
		\item   $\alpha\text{Lip}(A^{-1})=1+\lambda_2=1+\beta\leq \frac{1-\beta}{1-2\beta}$.
		\item $\alpha\text{Lip}(A^{-1})=1+\lambda_2=1+\beta$.
	\end{enumerate}
\end{proof}
\begin{remark}
	In \cite{BARBAGALLO}, it is stated that ``the number $1/2$ in Theorem \ref{BARBAGALLOAPP} can not be improved" (see line -7, page 20 , line -14, page 20, and line 14, page 18 in \cite{BARBAGALLO}). In an attempt of giving an example, an operator  $A$ is constructed  which satisfies $\|A-I_\mathcal{X}\|=1/2\|A\|$.   Unfortunately this example does not satisfy Inequality (\ref{BET}).  Reason is the following. The operator $A$ is constructed as follows. Let $\mathcal{X}$ be a Banach space and assume that $\mathcal{X}=\mathcal{Y} \oplus \mathcal{Z}$ for some closed subspaces $\mathcal{Y}$ and $\mathcal{Z}$ of $\mathcal{X}$. Then the projections $P:\mathcal{Y} \oplus \mathcal{Z}\ni y\oplus z \to y \in \mathcal{X}$, $Q:\mathcal{Y} \oplus \mathcal{Z}\ni y\oplus z \to z \in \mathcal{X}$  are bounded linear operators. Define $A\coloneqq 2P$ which is linear. Then
	\begin{align*}
		A-I_\mathcal{X}=2P-(P+Q)=P-Q.
	\end{align*}
	Now for $x=y\oplus z\in \mathcal{Y} \oplus \mathcal{Z}$, we have
	\begin{align*}
		\|Ax-x\|=\|Px-Qx\|=\|y\oplus (-z)\| \quad \text{ and } \quad \|Ax\|=\|2Px\|=2\|y\|.
	\end{align*}
	Thus there is no $\beta\geq 0$ such that 
	\begin{align*}
		\|Ax-x\|=\|y\oplus z\|\leq 2\beta\|y\|=\beta  \|Ax\|,\quad \forall x=y\oplus z\in \mathcal{Y} \oplus \mathcal{Z}.
	\end{align*}   In view of Theorem \ref{IMPORTANTTHEOREM}, we see that the statement given in  \cite{BARBAGALLO}  ``the number $1/2$ in Theorem \ref{BARBAGALLOAPP} can not be improved" is false. However, the statement ``$1/2$ is optimal for symmetry modulus'' is true.
\end{remark}
 We now give applications to the theory of frames.  Paley-Wiener theorem for orthonormal basis in Hilbert spaces inspired the study of perturbation of frames for Hilbert spaces. This was first derived by Christensen in his two  papers \cite{CHRISTENSENPALEY1, CHRISTENSENPALEY2}. This motivated the perturbation of frames and atomic decompositions for Banach spaces  \cite{CHRISTENSENHEIL}. Crucial result used in all these perturbation results is the Neumann series. Later, using Theorem \ref{CASAZZAKALTONTHEOREM}, Casazza and Christensen \cite{CASAZZACHRISTENSEN} improved the results obtained in paper \cite{CHRISTENSENPALEY2}. Using Theorem \ref{CASAZZAKALTONTHEOREM} Stoeva made a systematic study of perturbations of frames for Banach spaces \cite{STOEVA}. For the sake of completeness, we note that Theorem \ref{CASAZZAKALTONTHEOREM} was used in the study of perturbations of frames for Hilbert C*-modules  \cite{HANJING}.\\
  Large body of work on  frames for Hilbert spaces (see \cite{DUFFINSCHAEFFER, CHRISTENSENBOOK, HANLARSONMEMOIRS, HANKORNELSON}) lead to the well developed  theory of  frames (known as Banach frames and $\mathcal{X}_d$-frames) for Banach spaces (see \cite{GROCHENIG,  CASAZZAHANLARSON, CASAZZACHRISTENSENSTOEVA, CHRISTENSENSTOEVAP})  lead to the beginning of frames for frames for metric spaces (known as metric frames) \cite{KRISHNAJOHNSON}. 

For stating these definitions we need the   definition 
of BK-space (Banach scalar valued sequence space or Banach co-ordinate space).
\begin{definition} \cite{BANASMURSALEEN}
	A sequence space $\mathcal{M}_d$ is said to be  a BK-space if all the coordinate functionals are continuous, i.e.,
	whenever $\{x_n\}_n$ is a sequence in $\mathcal{M}_d$ converging to $x \in \mathcal{M}_d$, then each coordinate of 
	$x_n$ converges to each coordinate of $x$.
\end{definition}
\begin{definition}\cite{KRISHNAJOHNSON}\label{METRICBANACHFRAME}
	Let $\mathcal{M}$ be a metric space and $\mathcal{M}_d$ be an associated  BK-space. Let
	$\{f_n\}_{n}$ be a collection in $\operatorname{Lip}(\mathcal{M}, \mathbb{K})$ (where $\mathbb{K}=\mathbb{R}$ or  $\mathbb{C}$)
	and $S: \mathcal{M}_d \rightarrow \mathcal{M}$. If: 
	\begin{enumerate}[\upshape(i)]
		\item $\{f_n(x)\}_{n} \in \mathcal{M}_d$, for each  $x \in \mathcal{M}$,
		\item There exist positive $a, b$  such that 
		$
		a\, d(x,y) \leq \|\{f_n(x)-f_n(y)\}_n\|_{\mathcal{M}_d} \leq b\, d(x,y),  \forall x
		, y\in \mathcal{M},
		$
		\item $S$ is Lipschitz and $S(\{f_n(x)\}_{n})=x$, for each $x \in \mathcal{M}$,
	\end{enumerate}
	then we say that $(\{f_n\}_{n}, S)$ is a \textbf{metric frame}  for $\mathcal{M}$ with respect to  $\mathcal{M}_d$.  We say  constant $a$ as lower frame bound
	and constant $b$ as upper frame bound. 
\end{definition}
As noted in \cite{KRISHNAJOHNSON}, we observe that if $(\{f_n\}_{n}, S)$ is a metric frame  for $\mathcal{M}$ w.r.t. $\mathcal{M}_d$, then  Definition \ref{METRICBANACHFRAME} tells 
that the analysis map
\begin{align*}
\theta_f:\mathcal{M} \ni x \mapsto \theta_f x\coloneqq \{f_n(x)\}_{n} \in \mathcal{M}_d
\end{align*}
is well-defined and  bi-Lipschitz.  Further, $S\theta_f =I_\mathcal{M}$. Now we improve  Theorem 4 in \cite{CASAZZACHRISTENSEN} to metric frames for Banach spaces.

\begin{theorem}\label{STABILITYMA}
	Let $(\{f_n\}_{n}, S)$ be a
	metric frame with lower frame bound $a$ and upper frame bound $b$ for a Banach space    $\mathcal{X}$ w.r.t. $\mathcal{M}_d$. Let $T: \mathcal{M}_d \rightarrow \mathcal{X}$  be a Lipschitz map and suppose that  there exist $\lambda_1, \lambda_2, \mu\geq 0$ such that $\max\{\lambda_2, \lambda_1+\mu b\}<1$  and 
	\begin{align}\label{PERINEQ}
	\|S\{c_n\}_n-S\{d_n\}_n-(T\{c_n\}_n-T\{d_n\}_n)\|&\leq \lambda_1\|S\{c_n\}_n-S\{d_n\}_n\|+\lambda_2\|T\{c_n\}_n-T\{d_n\}_n\|\nonumber\\
	&\quad +\mu \|\{c_n-d_n\}_n\|, \quad \forall \{c_n\}_n,\{d_n\}_n \in  \mathcal{M}_d.
	\end{align}
	Then there exists a collection $\{g_n\}_{n}$ in $\operatorname{Lip}(\mathcal{X}, \mathbb{K})$ such that $(\{g_n\}_{n}, T)$ is  a metric frame for   $\mathcal{X}$ with lower and upper frame bounds
	\begin{align*}
	\frac{a(1-\lambda_2)}{1+\lambda_1+\mu b}, \quad \frac{b(1+\lambda_2)}{1-(\lambda_1+\mu b)}
	\end{align*}
	respectively.
\end{theorem}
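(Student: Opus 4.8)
The plan is to reduce the statement to Theorem \ref{NOBASEPOINTFIRST} by composing $T$ with the analysis map of the given frame. Recall that since $(\{f_n\}_n, S)$ is a metric frame for the Banach space $\mathcal{X}$, the analysis map $\theta_f : \mathcal{X} \to \mathcal{M}_d$, $\theta_f x = \{f_n(x)\}_n$, is bi-Lipschitz with $a\|x - y\| \leq \|\theta_f x - \theta_f y\|_{\mathcal{M}_d} \leq b\|x-y\|$, and satisfies $S\theta_f = I_{\mathcal{X}}$. First I would set $U := T\theta_f : \mathcal{X} \to \mathcal{X}$, which is Lipschitz as a composition of Lipschitz maps.

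The key step is to feed pairs from the range of $\theta_f$ into the perturbation inequality (\ref{PERINEQ}). Taking $\{c_n\}_n = \theta_f x$ and $\{d_n\}_n = \theta_f y$ gives $S\{c_n\}_n = x$, $S\{d_n\}_n = y$, $T\{c_n\}_n = Ux$, $T\{d_n\}_n = Uy$, and $\|\{c_n - d_n\}_n\|_{\mathcal{M}_d} = \|\theta_f x - \theta_f y\|_{\mathcal{M}_d} \leq b\|x-y\|$. Substituting and absorbing the $\mu$-term into the first term yields
\[
\|Ux - Uy - (x-y)\| \leq (\lambda_1 + \mu b)\|x - y\| + \lambda_2\|Ux - Uy\|, \quad \forall x, y \in \mathcal{X}.
\]
Here the hypothesis $\max\{\lambda_2, \lambda_1 + \mu b\} < 1$ is exactly what forces the two effective constants $\lambda_1 + \mu b$ and $\lambda_2$ into $[0,1)$. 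Applying Theorem \ref{NOBASEPOINTFIRST} with these constants, I conclude that $U$ is Lipschitz invertible and that
\[
\frac{1 - \lambda_2}{1 + \lambda_1 + \mu b}\|x - y\| \leq \|U^{-1}x - U^{-1}y\| \leq \frac{1 + \lambda_2}{1 - (\lambda_1 + \mu b)}\|x - y\|, \quad \forall x, y \in \mathcal{X}.
\]

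With $U^{-1}$ available I would define the dual family by $g_n := f_n \circ U^{-1}$ for each $n$; each $g_n$ is Lipschitz since $f_n$ and $U^{-1}$ are. Then $\{g_n(x)\}_n = \theta_f(U^{-1}x) \in \mathcal{M}_d$, giving axiom (i) of Definition \ref{METRICBANACHFRAME}, and $T\{g_n(x)\}_n = T\theta_f(U^{-1}x) = U(U^{-1}x) = x$, giving axiom (iii) (with $T$ Lipschitz by hypothesis). For the frame bounds (axiom (ii)) I would chain the bi-Lipschitz bounds of $\theta_f$ with the bounds on $U^{-1}$: since
\[
\|\{g_n(x) - g_n(y)\}_n\|_{\mathcal{M}_d} = \|\theta_f(U^{-1}x) - \theta_f(U^{-1}y)\|_{\mathcal{M}_d}
\]
is squeezed between $a\|U^{-1}x - U^{-1}y\|$ and $b\|U^{-1}x - U^{-1}y\|$, it is in turn squeezed between $\frac{a(1-\lambda_2)}{1 + \lambda_1 + \mu b}\|x-y\|$ and $\frac{b(1+\lambda_2)}{1 - (\lambda_1 + \mu b)}\|x-y\|$, which are precisely the claimed frame bounds.

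The only genuinely non-routine point is the second step: the perturbation inequality (\ref{PERINEQ}) carries an extra summand $\mu\|\{c_n - d_n\}_n\|$ that has no counterpart in the two-parameter form of Theorem \ref{IMPORTANTTHEOREM}/\ref{NOBASEPOINTFIRST}. The resolution is that one only needs the inequality on the range of $\theta_f$, where this term is controlled by the upper frame bound as $\mu b\|x - y\|$ and can be merged with the $\lambda_1$-term. Everything after that is a bookkeeping composition, and the exact shape of the stated bounds is a direct consequence of multiplying the analysis-map constants $a, b$ by the $U^{-1}$ constants supplied by Theorem \ref{NOBASEPOINTFIRST}.
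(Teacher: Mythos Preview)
Your proposal is correct and follows essentially the same route as the paper: both arguments substitute $\{c_n\}_n=\theta_f x$, $\{d_n\}_n=\theta_f y$ into (\ref{PERINEQ}), absorb the $\mu$-term via the upper frame bound $b$, apply the Lipschitz perturbation theorem to $T\theta_f$ (the paper cites Theorem \ref{IMPORTANTTHEOREM} with $S=I_\mathcal{X}$, which is exactly your Theorem \ref{NOBASEPOINTFIRST}), and then set $g_n=f_n\circ(T\theta_f)^{-1}$ to verify the frame axioms.
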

\begin{proof}
Given $x, y \in \mathcal{X}$, by taking $\{c_n\}_n$ as $\theta_fx$ and $\{d_n\}_n$ as $\theta_fy$ in Inequality (\ref{PERINEQ}) we get 
\begin{align*}
\|S\theta_fx-S\theta_fy-(T\theta_fx-T\theta_fy)\|\leq \lambda_1 \|S\theta_fx-S\theta_fy\|+\lambda_2 \|T\theta_fx-T\theta_fy\|+\mu\|\theta_fx-\theta_fy\|, \quad \forall x,y \in \mathcal{X}.
\end{align*}
But $S\theta_fx=x, \forall x \in \mathcal{X}$ and hence 
\begin{align*}
\|x-y-(T\theta_fx-T\theta_fy)\|&\leq \lambda_1 \|x-y\|+\lambda_2 \|T\theta_fx-T\theta_fy\|+\mu\|\theta_fx-\theta_fy\|\\
&\leq (\lambda_1+ \mu \operatorname{Lip}(\theta_f))\|x-y\|+\lambda_2 \|T\theta_fx-T\theta_fy\|\\
&\leq (\lambda_1+ \mu b)\|x-y\|+\lambda_2 \|T\theta_fx-T\theta_fy\|, \quad \forall x,y \in \mathcal{X}.
\end{align*}	
Theorem \ref{IMPORTANTTHEOREM} now says that the map $T\theta_f$ is Lipschitz invertible and
\begin{align*}
\frac{1-\lambda_2}{1+\lambda_1+ \mu b}\leq\text{Lip}(T\theta_f)^{-1}\leq\frac{1+\lambda_2}{1-(\lambda_1+ \mu b)}.
\end{align*} 
Define $g_n\coloneqq f_n(T\theta_f)^{-1}$ for all $n \in \mathbb{N}$. Then $g_n$ is Lipschitz for all $n$, $\{g_n(x)\}_{n} \in \mathcal{M}_d$, for each  $x \in \mathcal{M}$ and
\begin{align*}
 \|\{g_n(x)-g_n(y)\}_n\|&=\|\{f_n((T\theta_f)^{-1}(x))-f_n((T\theta_f)^{-1}(y))\}_n\|\leq b \|(T\theta_f)^{-1}(x)-(T\theta_f)^{-1}(y)\|\\
&\leq b \frac{1+\lambda_2}{1-(\lambda_1+ \mu b)}\|x-y\|, \quad \forall x,y \in \mathcal{X},
\end{align*}
\begin{align*}
a\frac{1-\lambda_2}{1+\lambda_1+ \mu b}\|x-y\|&\leq a \|(T\theta_f)^{-1}(x))-(T\theta_f)^{-1}(y)\|\leq \|\{f_n((T\theta_f)^{-1}(x))-f_n((T\theta_f)^{-1}(y))\}_n\|\\
&= \|\{g_n(x)-g_n(y)\}_n\|, \quad \forall x,y \in \mathcal{X}
\end{align*}
which establish lower and upper frame bounds.
Further,  
\begin{align*}
T\{g_n(x)\}_{n}=T\{f_n(T\theta_f)^{-1}(x)\}_{n}=T\theta_f((T\theta_f)^{-1}(x))=x, \quad \forall x \in \mathcal{X}.
\end{align*}
Therefore $(\{g_n\}_{n}, T)$ is  a metric frame for  $\mathcal{X}$.
\end{proof}
We now give another application of Theorem \ref{IMPORTANTTHEOREM}. For this purpose, we introduce  the notion of non-linear atomic decompositions.
\begin{definition}\label{ATOMICMETRIC}
	Let $\mathcal{X}$ be  a Banach  space $\mathcal{X} $ and $\mathcal{M}_d$ be a BK-space. Let
	$\{f_n\}_{n}$ be a sequence in $\operatorname{Lip}(\mathcal{X}, \mathbb{K})$
	and $\{\tau_n\}_{n}$ to be a sequence in   $\mathcal{X}$ If: 
	\begin{enumerate}[\upshape(i)]
		\item $\{f_n(x)\}_{n} \in \mathcal{M}_d$, for each  $x \in \mathcal{X}$,
		\item There exist positive $a, b$  such that 
		\begin{align*}
			a\, \|x-y\| \leq \|\{f_n(x)-f_n(y)\}_n\|_{\mathcal{M}_d} \leq b\, \|x-y\|, \quad \forall x
			, y\in \mathcal{X},
		\end{align*}
		\item $x=\sum_{n=1}^{\infty}f_n(x)\tau_n$, for each $x \in \mathcal{X}$,
	\end{enumerate}
	then we say that $(\{f_n\}_{n}, \{\tau_n\}_{n})$ is a \textbf{Lipschitz  atomic decomposition}   for $\mathcal{X}$ with respect to  $\mathcal{M}_d$. We say  constant $a$ as lower Lipschitz atomic  bound
	and constant $b$ as upper Lipschitz atomic  bound.
\end{definition}
In \cite{CASAZZAHANLARSON} it is proved that not every Banach space admits an atomic decomposition. Motivated from this, we ask the following open problem. 
\begin{question}
	Classify Banach spaces which admit Lipschitz atomic decompositions.
	\end{question}
Following proposition gives various examples of Lipschitz atomic decompositions.
\begin{proposition}\label{LIPATOMICPRO}
 Let $(\{g_n\}_n, \{\omega_n\}_n)$  be an atomic decomposition for a Banach space $\mathcal{Y}$ w.r.t.  BK-space $\mathcal{M}_d$. Let $\mathcal{X}$  be a Banach space and let $A:\mathcal{X}\to \mathcal{Y}$ be a bi-Lipschitz map such that there exists a linear map 	$A:\mathcal{Y}\to \mathcal{X}$ satisfying $BA=I_\mathcal{X}$. Then $(\{f_n\coloneqq g_nA\}_n, \{\tau_n\coloneqq B\omega_n\}_n)$ is a Lipschitz atomic decomposition $\mathcal{X}$ w.r.t. $\mathcal{M}_d$. In particular, if a Banach space admits a Schauder basis, then it admits a Lipschitz atomic decomposition.
\end{proposition}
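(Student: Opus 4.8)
The plan is to verify directly the three defining conditions of a Lipschitz atomic decomposition from Definition \ref{ATOMICMETRIC}, transporting the corresponding properties of the atomic decomposition $(\{g_n\}_n, \{\omega_n\}_n)$ of $\mathcal{Y}$ across the maps $A$ and $B$. Let $a', b'>0$ denote the lower and upper bounds of the given atomic decomposition of $\mathcal{Y}$, and let $a_A, b_A>0$ be the bi-Lipschitz constants of $A$, so that $a_A\|x-y\|\leq \|Ax-Ay\|\leq b_A\|x-y\|$ for all $x,y\in\mathcal{X}$. Since each $g_n$ is a bounded linear functional and $A$ is Lipschitz, each composite $f_n=g_nA$ lies in $\operatorname{Lip}(\mathcal{X},\mathbb{K})$, so the candidate system is of the correct type.

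First I would check condition (i). For $x\in\mathcal{X}$ we have $f_n(x)=g_n(Ax)$, and since $Ax\in\mathcal{Y}$ the atomic decomposition of $\mathcal{Y}$ gives $\{g_n(Ax)\}_n\in\mathcal{M}_d$; hence $\{f_n(x)\}_n\in\mathcal{M}_d$. Next, for condition (ii), the linearity of each $g_n$ yields $f_n(x)-f_n(y)=g_n(Ax)-g_n(Ay)=g_n(Ax-Ay)$, so that $\{f_n(x)-f_n(y)\}_n=\{g_n(Ax-Ay)\}_n$. Applying the norm-equivalence bounds of the atomic decomposition to the element $Ax-Ay\in\mathcal{Y}$ and then the bi-Lipschitz estimate for $A$ gives
\begin{align*}
a'a_A\|x-y\|\leq a'\|Ax-Ay\|\leq \|\{f_n(x)-f_n(y)\}_n\|_{\mathcal{M}_d}\leq b'\|Ax-Ay\|\leq b'b_A\|x-y\|,
\end{align*}
so $a=a'a_A$ and $b=b'b_A$ serve as the lower and upper Lipschitz atomic bounds.

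The main obstacle is condition (iii), the reconstruction formula, since it requires interchanging $B$ with an infinite sum. Here I would use that $B$ is a bounded linear map: the partial sums $\sum_{n=1}^N g_n(Ax)\omega_n$ converge to $Ax$ in $\mathcal{Y}$ by the atomic decomposition of $\mathcal{Y}$, and the continuity of $B$ then justifies
\begin{align*}
\sum_{n=1}^\infty f_n(x)\tau_n=\sum_{n=1}^\infty g_n(Ax)B\omega_n=B\left(\sum_{n=1}^\infty g_n(Ax)\omega_n\right)=B(Ax)=(BA)x=x.
\end{align*}
(Boundedness of $B$ is genuinely needed here, since the partial sums $\sum_{n=1}^N g_n(Ax)\omega_n$ need not lie in the range of $A$, so the Lipschitz inverse of $A$ on its range does not suffice.) This establishes that $(\{f_n\}_n,\{\tau_n\}_n)$ is a Lipschitz atomic decomposition for $\mathcal{X}$.

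Finally, for the \emph{in particular} assertion, if $\mathcal{X}$ admits a Schauder basis $\{e_n\}_n$ with coordinate functionals $\{e_n^*\}_n$, then $(\{e_n^*\}_n,\{e_n\}_n)$ is an atomic decomposition of $\mathcal{X}$ with respect to the associated coefficient BK-space $\mathcal{M}_d$. Taking $\mathcal{Y}=\mathcal{X}$ and $A=B=I_\mathcal{X}$ (which is bi-Lipschitz with constants $1$ and satisfies $BA=I_\mathcal{X}$) in the first part yields the desired Lipschitz atomic decomposition, completing the proof.
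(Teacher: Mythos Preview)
The paper states Proposition~\ref{LIPATOMICPRO} without proof, so there is no argument to compare against; your direct verification of the three conditions in Definition~\ref{ATOMICMETRIC} is exactly the natural route and is carried out correctly.

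One point worth flagging (which you already noticed): the proposition as stated only requires $B$ to be \emph{linear}, not bounded, yet your step $\sum_{n=1}^\infty g_n(Ax)B\omega_n = B\bigl(\sum_{n=1}^\infty g_n(Ax)\omega_n\bigr)$ genuinely needs continuity of $B$, since the partial sums need not lie in $A(\mathcal{X})$ where $B$ agrees with the Lipschitz inverse $A^{-1}$. This is a gap in the \emph{statement} rather than in your argument; the intended hypothesis is almost certainly that $B$ is bounded linear, and you are right to make this explicit. With that understood, your proof is complete and the final paragraph handling the Schauder-basis case via $A=B=I_\mathcal{X}$ is the expected reduction.
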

Particular case of Proposition \ref{LIPATOMICPRO} gives the following example.
\begin{example}
Let $(\{g_n\}_n, \{\omega_n\}_n)$  be an atomic decomposition for a Banach space $\mathcal{X}$ w.r.t. a BK-space $\mathcal{M}_d$. Let 	$T:\mathcal{X}\to \mathcal{X}$ be any bi-Lipschitz map. Define $A:\mathcal{X}\ni x \mapsto (x, Tx) \in  \mathcal{X}\oplus \mathcal{X}$ and $B:\mathcal{X}\oplus \mathcal{X} \ni (x,y)\mapsto x \in \mathcal{X}$. Then $A$ is bi-Lipschitz, $B$  is linear and satisfies $BA=I_\mathcal{X}$. Hence $(\{f_n\coloneqq g_nA\}_n, \{\tau_n\coloneqq B\omega_n\}_n)$ is a Lipschitz atomic decomposition for $\mathcal{X}$ w.r.t. $\mathcal{M}_d$.
\end{example}
At this point it seems  that it is best to develop some theory of Lipschitz atomic decompositions before giving an application of Theorem \ref{IMPORTANTTHEOREM}.

Proposition 2.3 in \cite{CASAZZAHANLARSON} shows that under certain conditions, there is a close relationship between Banach frames and atomic decompositions. Following is the non-linear version of that result. 
\begin{proposition}
	Let $\mathcal{X}$ be  a Banach space  and $\mathcal{M}_d$ be a BK-space. Let
	$\{f_n\}_n$ be a sequence in  $\operatorname{Lip}(\mathcal{X}, \mathbb{K})$ and $S:\mathcal{M}_d\rightarrow
	\mathcal{X}$ be a bounded linear operator. If the standard unit
	vectors  $\{e_n\}_n$ form a Schauder basis for   $\mathcal{M}_d$, then the following are equivalent.
	\begin{enumerate}[\upshape(i)]
		\item $(\{f_n\}_n, S)$ is a metric frame for  $\mathcal{X}$.
		\item $(\{f_n\}_n, \{Se_n\}_n)$ is a Lipschitz  atomic decomposition for  $\mathcal{X}$
		w.r.t.  $\mathcal{M}_d$.
	\end{enumerate}
\end{proposition}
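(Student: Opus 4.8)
The plan is to observe that Definition \ref{METRICBANACHFRAME} and Definition \ref{ATOMICMETRIC} share verbatim their conditions (i) and (ii): both require $\{f_n(x)\}_n \in \mathcal{M}_d$ for every $x \in \mathcal{X}$, and both impose the same bi-Lipschitz estimate $a\|x-y\| \le \|\{f_n(x)-f_n(y)\}_n\|_{\mathcal{M}_d} \le b\|x-y\|$. Consequently the equivalence reduces entirely to showing that, under the standing hypotheses (that $S$ is a bounded linear operator and that $\{e_n\}_n$ is a Schauder basis for $\mathcal{M}_d$), the reconstruction condition $S(\{f_n(x)\}_n)=x$ appearing in the metric frame definition is the very same statement as the series reconstruction $x=\sum_{n=1}^\infty f_n(x)\,Se_n$ of a Lipschitz atomic decomposition.

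The key step is the interchange of $S$ with the basis expansion. Fix $x \in \mathcal{X}$. By condition (i), which holds in both settings, the sequence $c\coloneqq \{f_n(x)\}_n$ lies in $\mathcal{M}_d$, so the Schauder basis hypothesis gives the norm-convergent expansion $c=\sum_{n=1}^\infty f_n(x)\,e_n$ in $\mathcal{M}_d$. Since $S$ is bounded and linear, it is continuous, and therefore
\[
S(\{f_n(x)\}_n)=S\!\left(\sum_{n=1}^\infty f_n(x)\,e_n\right)=\sum_{n=1}^\infty f_n(x)\,Se_n=\sum_{n=1}^\infty f_n(x)\,\tau_n,
\]
where $\tau_n\coloneqq Se_n$. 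This single identity is the whole content of the proposition.

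With this identity in hand, both implications are immediate. For (i) $\Rightarrow$ (ii) I would note that a bounded linear $S$ is Lipschitz, that conditions (i) and (ii) transfer directly, and that the displayed identity turns $S(\{f_n(x)\}_n)=x$ into $\sum_n f_n(x)\tau_n=x$, which is condition (iii) of Definition \ref{ATOMICMETRIC}. For (ii) $\Rightarrow$ (i) I would again transfer (i) and (ii), observe that $S$ is already Lipschitz by hypothesis, and read the same identity in reverse to obtain $S(\{f_n(x)\}_n)=\sum_n f_n(x)\tau_n=x$, which is condition (iii) of Definition \ref{METRICBANACHFRAME}. The only point demanding care, and hence the main (though mild) obstacle, is the justification of pulling $S$ inside the infinite sum; this is exactly where both the Schauder basis hypothesis on $\{e_n\}_n$ (to guarantee norm convergence of $\sum_n f_n(x)e_n$ in $\mathcal{M}_d$) and the boundedness of $S$ (to guarantee continuity) are used, and neither hypothesis can be dropped.
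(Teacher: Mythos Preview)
Your proposal is correct and is essentially the same as the paper's own proof: the paper also reduces everything to the single identity
\[
\sum_{n=1}^\infty f_n(x)\,Se_n = S\!\left(\sum_{n=1}^\infty f_n(x)e_n\right) = S\big(\{f_n(x)\}_n\big),
\]
obtained by expanding $\{f_n(x)\}_n$ in the Schauder basis $\{e_n\}_n$ and using the bounded linearity of $S$. Your write-up simply makes explicit the fact that conditions (i) and (ii) of the two definitions coincide and spells out both directions, but the argument is identical.
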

\begin{proof}
	We set $\tau_n=Se_n,\forall n \in \mathbb{N}$ and see that 
	\begin{align*}
	\sum_{n=1}^\infty f_n(x)\tau_n=\sum_{n=1}^\infty f_n(x)Se_n=S\left(\sum_{n=1}^\infty f_n(x)e_n\right)=S\left(\{f_n(x)\}_n\right), \quad \forall x \in 
	\mathcal{X}.
	\end{align*}
	\end{proof}
Well established dilation theory of frames for Hilbert spaces says that frames are images of Riesz bases under projections \cite{CZAJA, KASHINKUKILOVA, HANLARSONMEMOIRS}. This result has been   extended to frames and atomic decompositions for Banach spaces \cite{HANLARSONLIUJOURNALFUN, HANLARSONLIUCON, LARSONSZAFRANIEC, HANLARSONLIULIU, CASAZZAHANLARSON}.  In the next theorem we derive a dilation result for Lipschitz atomic decompositions. We need a proposition  to use in the theorem.
\begin{proposition}\cite{LINDENSTRAUSSBOOK}\label{SBCHAR}
	A sequence 	$\{\tau_n\}_{n}$ in a Banach space $\mathcal{X}$ is a Schauder basis for $\mathcal{X}$ if and only if the following three conditions hold.
	\begin{enumerate}[\upshape(i)]
		\item $ \tau_n\neq 0$ for all $n$.  
		\item There exists $b>0$ such that for every sequence $\{a_k\}_{k}$ of scalars and every pair of natural numbers $n<m$, we have 
		\begin{align*}
		\left\|\sum_{k=1}^na_k\tau _k\right\|\leq b \left\|\sum_{k=1}^ma_k\tau _k\right\|.
		\end{align*}
		\item  $\overline{\operatorname{span}}\{\tau_n\}_{n}=\mathcal{X}$.
		
	\end{enumerate}
\end{proposition}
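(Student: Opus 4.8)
The plan is to prove both implications, treating the forward direction (necessity of (i)--(iii)) quickly and devoting most effort to the converse (sufficiency), which carries the real content. Throughout I write $V \coloneqq \operatorname{span}\{\tau_n\}_n$ for the set of finite linear combinations.

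For necessity, assume $\{\tau_n\}_n$ is a Schauder basis, so every $x$ has a \emph{unique} expansion $x = \sum_k a_k(x)\tau_k$. Condition (iii) is immediate, since each $x$ is the norm-limit of its partial sums, which lie in $V$. Condition (i) follows from uniqueness: if $\tau_n=0$, then $0$ would admit the two distinct expansions $0$ and $1\cdot\tau_n$. The substance is condition (ii), which is equivalent to uniform boundedness of the partial-sum projections $P_n x \coloneqq \sum_{k=1}^n a_k(x)\tau_k$. The standard route is to equip $\mathcal{X}$ with the auxiliary norm $\vert\vert\vert x\vert\vert\vert \coloneqq \sup_n \|P_n x\|$, show that $(\mathcal{X}, \vert\vert\vert\cdot\vert\vert\vert)$ is complete (the one genuinely technical lemma), observe that $\|x\| = \lim_n\|P_n x\| \le \vert\vert\vert x\vert\vert\vert$, and conclude via the bounded inverse theorem that the two norms are equivalent. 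This yields $b \coloneqq \sup_n \|P_n\| < \infty$, and applying $P_n$ to $\sum_{k=1}^m a_k\tau_k$ produces (ii).

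For sufficiency, assume (i)--(iii). A preliminary observation is that (i) and (ii) together force the $\tau_n$ to be linearly independent: if a finite combination vanishes, repeatedly applying (ii) to successive truncations and using $\tau_k\neq 0$ annihilates the coefficients one at a time. Hence the truncation maps $S_n \colon \sum_k a_k\tau_k \mapsto \sum_{k=1}^n a_k\tau_k$ are well defined on $V$, and (ii) says exactly that $\|S_n v\|\le b\|v\|$ for all $v\in V$ (taking $b\ge 1$ without loss of generality, and using $S_n v = v$ once $n$ exceeds the length of $v$). Since $V$ is dense by (iii), each $S_n$ extends to a bounded projection $P_n\colon \mathcal{X}\to\mathcal{X}$ with $\|P_n\|\le b$, range $\operatorname{span}\{\tau_1,\dots,\tau_n\}$, and consistency relations $P_nP_m = P_{\min(n,m)}$. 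The key step is to show $P_n x \to x$ for every $x\in\mathcal{X}$, a routine $3\varepsilon$-argument combining density of $V$, the bound $\|P_n\|\le b$, and the fact that $P_n v = v$ eventually for $v\in V$.

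Finally I would manufacture the coefficient functionals, where condition (i) is essential: since $P_n - P_{n-1}$ is a projection with range $\operatorname{span}\{\tau_n\}$ and $\tau_n\neq 0$, it is rank-one, so $(P_n - P_{n-1})x = a_n(x)\tau_n$ for a uniquely determined continuous scalar functional $a_n$. Telescoping $P_n x = \sum_{k=1}^n (P_k-P_{k-1})x$ and letting $n\to\infty$ gives the expansion $x = \sum_k a_k(x)\tau_k$; uniqueness follows by applying $P_n - P_{n-1}$ to any representation $x=\sum_k b_k\tau_k$ and invoking $\tau_n\neq 0$ to force $b_n = a_n(x)$. Thus $\{\tau_n\}_n$ is a Schauder basis. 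I expect the main obstacle to be the completeness of $(\mathcal{X}, \vert\vert\vert\cdot\vert\vert\vert)$ in the necessity direction; the sufficiency direction is essentially bookkeeping once the uniform bound (ii) is in hand.
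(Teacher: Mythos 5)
The paper offers no proof of this proposition at all — it is imported verbatim from \cite{LINDENSTRAUSSBOOK} — and your argument is exactly the classical one found in that reference (Banach's criterion): uniform boundedness of the partial-sum projections via the auxiliary norm $\vert\vert\vert x \vert\vert\vert = \sup_n\|P_nx\|$ and the bounded inverse theorem for necessity, and for sufficiency the extension of the truncation maps $S_n$ from the dense span to projections $P_n$ with $\|P_n\|\le b$, convergence $P_nx\to x$ by a $3\varepsilon$-argument, and rank-one differences $P_n-P_{n-1}$ producing the coefficient functionals. Your outline is correct as it stands; the single step you leave as an assertion — completeness of $(\mathcal{X},\vert\vert\vert\cdot\vert\vert\vert)$ — is indeed the standard technical lemma and goes through exactly as you anticipate.
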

\begin{theorem}\label{LIPPEL}
	Let $(\{f_n\}_{n}, \{\tau_n\}_{n})$ be a Lipschitz  atomic decomposition for a Banach space $\mathcal{X}$ w.r.t. $\mathcal{M}_d$.
	Then there is a Banach space $\mathcal{Z}$ with a  Schauder basis $\{\omega_n\}_{n}$, an injective map $\theta: \mathcal{X} \to
	\mathcal{Z}$ and a 
 map $P:\mathcal{Z}\rightarrow \mathcal{Z}$ satisfying $P(\mathcal{Z})=\mathcal{X}$, 
	$P^2=P$ and
	$P\omega_n=\theta\tau_n, \forall n \in \mathbb{N}$.
	
\end{theorem}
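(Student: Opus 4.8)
The plan is to realise $\mathcal{X}$ as a complemented subset of a sequence space built directly from the synthesis system $\{\tau_n\}_n$, and to invoke Proposition \ref{SBCHAR} to certify that the coordinate vectors form a Schauder basis. Discarding first (if necessary) the indices with $\tau_n=0$, which contribute nothing to the reconstruction $x=\sum_n f_n(x)\tau_n$, I may assume $\tau_n\neq 0$ for every $n$. I would then define $\mathcal{Z}$ to be the set of scalar sequences $\{c_n\}_n$ for which $\sum_{n=1}^\infty c_n\tau_n$ converges in $\mathcal{X}$, equipped with $\|\{c_n\}_n\|_\mathcal{Z}:=\sup_m\|\sum_{n=1}^m c_n\tau_n\|$. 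A routine Cauchy-sequence argument (the coordinate functionals are continuous because $\|c_n\tau_n\|\le 2\|\{c_n\}_n\|_\mathcal{Z}$, hence $|c_n|\le 2\|\{c_n\}_n\|_\mathcal{Z}/\|\tau_n\|$) shows that $\mathcal{Z}$ is a Banach space, and that the finitely supported sequences are dense in it.

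Taking $\omega_n:=e_n$, the $n$-th coordinate vector, I would verify the three conditions of Proposition \ref{SBCHAR}: (i) $\|\omega_n\|_\mathcal{Z}=\|\tau_n\|\neq 0$; (ii) monotonicity with constant $b=1$, which is immediate since $\|\sum_{k=1}^N a_k\omega_k\|_\mathcal{Z}=\sup_{1\le j\le N}\|\sum_{n=1}^j a_n\tau_n\|$ only grows in $N$, so truncating a finite combination shrinks the supremum of partial-sum norms; and (iii) density of $\operatorname{span}\{\omega_n\}_n$, which holds because the tail $\sup_{m>N}\|\sum_{n=N+1}^m c_n\tau_n\|$ of a convergent series tends to $0$. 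Hence $\{\omega_n\}_n$ is a (monotone) Schauder basis for $\mathcal{Z}$.

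Next I would produce the embedding and the idempotent. Define $\theta:\mathcal{X}\to\mathcal{Z}$ by $\theta(x):=\{f_n(x)\}_n$; this lands in $\mathcal{Z}$ precisely because the reconstruction series converges, and it is injective since $f_n(x)=f_n(y)$ for all $n$ forces $x=\sum_n f_n(x)\tau_n=\sum_n f_n(y)\tau_n=y$. Define the synthesis map $S:\mathcal{Z}\to\mathcal{X}$ by $S(\{c_n\}_n):=\sum_{n=1}^\infty c_n\tau_n$; it is linear and bounded with $\|S\|\le 1$, since $\|\sum_n c_n\tau_n\|\le\sup_m\|\sum_{n=1}^m c_n\tau_n\|=\|\{c_n\}_n\|_\mathcal{Z}$. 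The reconstruction identity is exactly $S\theta=I_\mathcal{X}$. Setting $P:=\theta S:\mathcal{Z}\to\mathcal{Z}$, the relation $S\theta=I_\mathcal{X}$ gives $P^2=\theta(S\theta)S=\theta S=P$; it also makes $S$ surjective, so $P(\mathcal{Z})=\theta(S(\mathcal{Z}))=\theta(\mathcal{X})$, the copy of $\mathcal{X}$ sitting inside $\mathcal{Z}$ via the injection $\theta$; and $P\omega_n=\theta(Se_n)=\theta(\tau_n)$, as required.

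The construction presents no deep obstacle; the only points demanding care are the completeness of $\mathcal{Z}$ and the verification that $\{\omega_n\}_n$ meets the hypotheses of Proposition \ref{SBCHAR}, in particular that the chosen supremum norm is what makes the monotonicity condition automatic. The essential idea, and the one place the hypotheses of a Lipschitz atomic decomposition are genuinely used, is that the reconstruction series $\sum_n f_n(x)\tau_n$ converges for every $x$: this is what places $\theta(\mathcal{X})$ inside $\mathcal{Z}$ and forces $S\theta=I_\mathcal{X}$, whence idempotency, the correct range, and the relation $P\omega_n=\theta\tau_n$ all follow formally. Observe that neither $\theta$ nor $P$ is required to be linear, matching the statement, and that the argument is insensitive to the $\mathcal{M}_d$-structure, which enters only through the existence of the coefficient maps $f_n$.
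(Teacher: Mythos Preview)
Your argument is correct and follows essentially the same route as the paper: construct the sequence space carrying the $\sup$-of-partial-sums norm, invoke Proposition~\ref{SBCHAR} to see that the unit vectors form a Schauder basis, and set $P=\theta S$ with $\theta$ the analysis map and $S$ the synthesis map, so that $S\theta=I_\mathcal{X}$ yields $P^2=P$ and $P\omega_n=\theta\tau_n$. The only cosmetic difference is that the paper builds $\mathcal{Z}$ as the abstract completion of $c_{00}$ rather than as your concrete space of sequences with convergent synthesis, and handles the indices with $\tau_n=0$ by an explicit direct sum with $\ell^2(J^c)$ rather than by discarding---your reduction is fine in spirit, but you should say one word about how to restore those indices so that $P\omega_n=\theta\tau_n$ holds for \emph{all} $n\in\mathbb{N}$, not just for $n\in J$.
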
	
\begin{proof}
We generalize the idea of proof of Theorem 2.6 in \cite{CASAZZAHANLARSON} (which is motivated from the arguments in \cite{PELCZYNSKI}) to non-linear setting.	Let $c_{00}$ be the vector space of scalar sequences with only finitely many non-zero terms. Let $\{e_n\}_{n}$ be the standard unit vectors in $c_{00}$. \\
	Case (i): $\tau_n\neq 0$, for all $n$. We define a norm  on $c_{00}$ as follows. Let $\{a_n\}_{n} \in c_{00}$. Define 
	\begin{align}\label{NORMDEF}
	\left\|\sum_{n=1}^\infty a_ne_n\right\|\coloneqq \max _{n}\left\|\sum_{k=1}^na_k\tau_k\right\|.
	\end{align}
	Proposition \ref{SBCHAR} then tells that $\{e_n\}_{n}$ is a Schauder basis for the completion of $c_{00}$, call as $\mathcal{Z}$ w.r.t. just defined  norm. Define 
	\begin{align*}
	\theta: \mathcal{X} \ni x \mapsto \theta x\coloneqq \sum_{n=1}^\infty f_n(x)e_n \in \mathcal{Z}.
	\end{align*}
	From the first condition of the definition of Lipschitz atomic decomposition, from Definition \ref{NORMDEF} and from the construction of $\mathcal{Z}$, it follows that $\theta$ is well-defined.  From the third condition of definition of atomic decomposition, $\theta$ is injective. We next define 
	\begin{align*}
	\Gamma: \mathcal{Z} \ni \sum_{n=1}^\infty a_ne_n \mapsto \Gamma \left(\sum_{n=1}^\infty a_ne_n\right)\coloneqq \sum_{n=1}^\infty a_n\tau_n \in \mathcal{X}.
	\end{align*}
	By verifying $\Gamma$ is bounded linear on dense subset $c_{00}$ of $\mathcal{Z}$, we see that $\Gamma$ is bounded on $\mathcal{Z}$. Then 
	\begin{align}\label{PISONTO}
	\Gamma \theta x=\Gamma\left(\sum_{n=1}^\infty f_n(x)e_n\right)=\sum_{n=1}^\infty f_n(x)\tau_n=x, \quad \forall x \in \mathcal{X}.
	\end{align}
	So if we define $P\coloneqq \theta \Gamma$, then $P^2=\theta \Gamma\theta \Gamma=\theta \Gamma=P$. Equation (\ref{PISONTO}) tells that $P(\mathcal{Z})=\mathcal{X}$. We next see that $Pe_n=\theta \Gamma e_n=\theta \tau_n$, $\forall n $. Thus we can take $\omega_n=e_n$, for all $n$ to get the result. \\
	Case (ii): $\tau_n= 0$, for some $n$. Let $J=\{n:\tau_n\neq 0\}$. We now apply case (i) to the collection $\{a_n\}_{n\in J}$. Let $\theta$, $\mathcal{Z}$, $\Gamma$ and $P$ be as in the case (i). Without affecting the definition of atomic decomposition, we can take  $f_n= 0$ for all $n \in J^c$. Now consider the space $\mathcal{Z}\oplus \ell^2(J^c)$ and let $\{\rho_n\}_{n\in J^c}$ be an orthonormal basis for $\ell^2(J^c)$. Define $Q:\mathcal{Z}\oplus \ell^2(J^c) \ni z\oplus y\mapsto Q(z\oplus y)\coloneqq P z\oplus 0 \in \mathcal{Z}\oplus \ell^2(J^c)$. Now the space $\mathcal{Z}\oplus \ell^2(J^c)$ has  Schauder basis $\{\tau_n \oplus 0, 0\oplus \rho_m\}_{n \in J, m\in J^c}$ and  $Q$ satisfy the conclusions. Thus we can take $\omega_n=e_n$, for all $n\in J$ and  $\omega_n=\rho_n$, for all $n\in J^c$ to get the result.
\end{proof}
We leave the further study of Lipschitz atomic decomposition to future work and end the paper with an application of Theorem \ref{IMPORTANTTHEOREM}.
\begin{theorem}\label{APPLICATIONTHEOREMLAST}
Let $(\{f_n\}_{n}, \{\tau_n\}_{n})$ be  a Lipschitz  atomic decomposition with  lower  Lipschitz atomic bound $a$ and upper Lipschitz atomic  bound $b$  for $\mathcal{X}$ w.r.t. $\mathcal{M}_d$. 	Let $\{\omega_n\}_{n}$ be a collection in $ \mathcal{X}$ and suppose that  there exist $\lambda_1, \lambda_2, \mu\geq 0$ such that the following conditions hold.
\begin{enumerate}[\upshape(i)]
	\item For each $x\in  \mathcal{X}$, the series $\sum_{n=1}^{\infty}f_n(x)\omega_n$ converges in $\mathcal{X}$.
	\item $\max\{\lambda_2, \lambda_1+\mu b\}<1$  and 
	\item 
\end{enumerate}  
\begin{align}\label{PERINEQLIP}
\left\|\sum_{n=1}^{\infty}(c_n-d_n)(\tau_n-\omega_n)\right\|&\leq \lambda_1\left \|\sum_{n=1}^{\infty}(c_n-d_n)\tau_n\right\|+\lambda_2\left \|\sum_{n=1}^{\infty}(c_n-d_n)\omega_n\right\|\nonumber\\
&\quad +\mu \|\{c_n-d_n\}_n\|, \quad \forall \{c_n\}_n,\{d_n\}_n \in  \mathcal{M}_d.
\end{align}
Then there exists a collection  $\{g_n\}_{n}$  in $\operatorname{Lip}(\mathcal{X}, \mathbb{K})$ such that $(\{g_n\}_{n}, \{\omega_n\}_{n})$ is   a Lipschitz  atomic decomposition for $\mathcal{X}$ with lower and upper Lipschitz atomic bounds 
\begin{align*}
\frac{a(1-\lambda_2)}{1+\lambda_1+\mu b}, \quad \frac{b(1+\lambda_2)}{1-(\lambda_1+\mu b)}
\end{align*}
respectively.
\end{theorem}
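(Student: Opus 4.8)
The plan is to follow the proof of Theorem \ref{STABILITYMA} almost verbatim, transporting the hypothesis through the analysis map of the given atomic decomposition so that Theorem \ref{IMPORTANTTHEOREM} applies directly. First I would record what Definition \ref{ATOMICMETRIC} gives for $(\{f_n\}_n, \{\tau_n\}_n)$: the analysis map $\theta_f : \mathcal{X} \to \mathcal{M}_d$, $\theta_f x = \{f_n(x)\}_n$, is well-defined by axiom (i), is bi-Lipschitz with $a\|x-y\| \le \|\theta_f x - \theta_f y\|_{\mathcal{M}_d} \le b\|x-y\|$ by axiom (ii), and reconstructs via $\sum_{n} f_n(x)\tau_n = x$ by axiom (iii). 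I would then introduce the candidate operator $\Phi : \mathcal{X} \to \mathcal{X}$ defined by $\Phi x := \sum_{n=1}^{\infty} f_n(x)\omega_n$, which is well-defined precisely because of hypothesis (i) of the theorem.

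The key step is to substitute $\{c_n\}_n = \theta_f x$ and $\{d_n\}_n = \theta_f y$ into Inequality (\ref{PERINEQLIP}). Using linearity of the (convergent) series, I get $\sum_{n}(f_n(x)-f_n(y))\tau_n = x-y$, $\sum_{n}(f_n(x)-f_n(y))\omega_n = \Phi x - \Phi y$, and hence $\sum_{n}(f_n(x)-f_n(y))(\tau_n-\omega_n) = (x-y)-(\Phi x - \Phi y)$. Combining this with the upper frame estimate $\|\{f_n(x)-f_n(y)\}_n\|_{\mathcal{M}_d} = \|\theta_f x - \theta_f y\|_{\mathcal{M}_d} \le b\|x-y\|$ to absorb the $\mu$-term, Inequality (\ref{PERINEQLIP}) collapses to
\begin{align*}
\|(x-y)-(\Phi x - \Phi y)\| \le (\lambda_1 + \mu b)\|x-y\| + \lambda_2\|\Phi x - \Phi y\|, \quad \forall x,y \in \mathcal{X}.
\end{align*}
By hypothesis (ii), both $\lambda_1 + \mu b$ and $\lambda_2$ lie in $[0,1)$, so Theorem \ref{IMPORTANTTHEOREM} applied with $S = I_\mathcal{X}$ (whence $\text{Lip}(S)=\text{Lip}(S^{-1})=1$) and $\lambda_1$ replaced by $\lambda_1+\mu b$ shows that $\Phi$ is Lipschitz invertible and satisfies
\begin{align*}
\frac{1-\lambda_2}{1+\lambda_1+\mu b}\|x-y\| \le \|\Phi^{-1}x - \Phi^{-1}y\| \le \frac{1+\lambda_2}{1-(\lambda_1+\mu b)}\|x-y\|, \quad \forall x,y \in \mathcal{X}.
\end{align*}

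Finally I would set $g_n := f_n \circ \Phi^{-1}$, each of which is Lipschitz as a composition of Lipschitz maps, so $g_n \in \operatorname{Lip}(\mathcal{X}, \mathbb{K})$, and then verify the three axioms of Definition \ref{ATOMICMETRIC} for $(\{g_n\}_n, \{\omega_n\}_n)$. Axiom (i) is immediate since $\{g_n(x)\}_n = \{f_n(\Phi^{-1}x)\}_n \in \mathcal{M}_d$, and axiom (iii) follows from $\sum_{n} g_n(x)\omega_n = \sum_{n} f_n(\Phi^{-1}x)\omega_n = \Phi(\Phi^{-1}x) = x$. For the frame bounds in axiom (ii), I would apply the original bounds $a\|z-w\| \le \|\{f_n(z)-f_n(w)\}_n\|_{\mathcal{M}_d} \le b\|z-w\|$ with $z = \Phi^{-1}x$ and $w = \Phi^{-1}y$, then insert the Lipschitz estimates on $\Phi^{-1}$ just obtained, which yields exactly the claimed bounds $\frac{a(1-\lambda_2)}{1+\lambda_1+\mu b}$ and $\frac{b(1+\lambda_2)}{1-(\lambda_1+\mu b)}$. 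I do not expect a genuine obstacle here; the only points demanding care are checking that the three series manipulations are legitimate (guaranteed by convergence of $\sum f_n(\cdot)\tau_n$ and of $\sum f_n(\cdot)\omega_n$) and confirming that the substitution reduces Inequality (\ref{PERINEQLIP}) to precisely the hypothesis of Theorem \ref{IMPORTANTTHEOREM}.
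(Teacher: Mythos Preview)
Your proposal is correct and follows essentially the same approach as the paper: define the map $\Phi x = \sum_n f_n(x)\omega_n$ (the paper calls it $T$), substitute $c_n = f_n(x)$, $d_n = f_n(y)$ into Inequality~(\ref{PERINEQLIP}) and use the upper atomic bound to absorb the $\mu$-term, apply Theorem~\ref{IMPORTANTTHEOREM} with $S = I_\mathcal{X}$, and set $g_n = f_n \circ \Phi^{-1}$. The verification of the atomic-decomposition axioms and the bounds is identical to what the paper does.
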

\begin{proof}
From the first condition we get that the map $T: \mathcal{X} \ni x \mapsto \sum_{n=1}^{\infty}f_n(x)\omega_n \in \mathcal{X}$ is well-defined. Now using  Inequality 	(\ref{PERINEQLIP}),
\begin{align*}
\|x-y-(Tx-Ty)\|&=\left\| \sum_{n=1}^{\infty}(f_n(x)-f_n(y))\tau_n- \sum_{n=1}^{\infty}(f_n(x)-f_n(y))\omega_n\right\|\\
&=\left\|\sum_{n=1}^{\infty}(f_n(x)-f_n(y))(\tau_n-\omega_n)\right\|\\
&\leq \lambda_1 \left\|\sum_{n=1}^{\infty}(f_n(x)-f_n(y))\tau_n\right\|+\lambda_2\left\|\sum_{n=1}^{\infty}(f_n(x)-f_n(y))\omega_n\right\| +\mu \|\{f_n(x)-f_n(y)\}_n\|\\
&=\lambda_1 \left\|x-y\right\|+\lambda_2\left\|Tx-Ty\right\| +\mu \|\{f_n(x)-f_n(y)\}_n\|\\
&\leq (\lambda_1+\mu b)\left\|x-y\right\|+\lambda_2\left\|Tx-Ty\right\|, \quad \forall x, y \in \mathcal{X}.
\end{align*}
 Theorem \ref{IMPORTANTTHEOREM} then says that $T$ is Lipschitz, invertible and 
 \begin{align*}
 \frac{1-\lambda_2}{1+\lambda_1+ \mu b}\leq\text{Lip}(T)^{-1}\leq\frac{1+\lambda_2}{1-(\lambda_1+ \mu b)}.
 \end{align*}
 Define $g_n\coloneqq f_nT^{-1}$, $\forall n \in \mathbb{N}$. Then $\{g_n(x)\}_{n} \in \mathcal{M}_d$, for each  $x \in \mathcal{X}$ and
 \begin{align*}
 \|\{g_n(x)-g_n(y)\}_n\|&= \|\{f_n(T^{-1}x)-f_n(T^{-1}y)\}_n\|_n\leq b\|T^{-1}x-T^{-1}y\|\\
 &\leq b \frac{1+\lambda_2}{1-(\lambda_1+ \mu b)}\|x-y\|, \quad \forall x,y \in \mathcal{X},
 \end{align*}
 \begin{align*}
 a\frac{1-\lambda_2}{1+\lambda_1+ \mu b}\|x-y\|&\leq a \|T^{-1}x-T^{-1}y\|\leq \|\{f_n(T^{-1}x)-f_n(T^{-1}y)\}_n\|\\
 &= \|\{g_n(x)-g_n(y)\}_n\|, \quad \forall x,y \in \mathcal{X}.
 \end{align*}
 Finally 
 \begin{align*}
 \sum_{n=1}^{\infty}g_n(x)\omega_n=\sum_{n=1}^{\infty}f_n(T^{-1}x)\omega_n=T(T^{-1}x)=x, \quad \forall x \in \mathcal{X}.
 \end{align*}
\end{proof}
We conclude the paper with the following remarks.
\begin{remark}
We can improve Theorems \ref{SODERLINDCAMPANATOAPP},  \ref{BARBAGALLOAPP},   \ref{STABILITYMA} and 	 \ref{APPLICATIONTHEOREMLAST}  using Theorem \ref{LASTTHEOREM}.
\end{remark}
\begin{remark}
So far in the literature, 	there are three ways to prove Theorem \ref{CASAZZAKALTONTHEOREM} one given in \cite{CASAZZAKALTON}, another in \cite{VANEIJNDHOVEN} and yet another in \cite{CASAZZACHRISTENSEN}. As we mentioned earlier, we have done the non-linear version of arguments used in \cite{VANEIJNDHOVEN}. We hope that arguments used in \cite{CASAZZAKALTON}  and \cite{CASAZZACHRISTENSEN} can be generalized to give different proofs of Theorem \ref{IMPORTANTTHEOREM}.
\end{remark}
\section{Acknowledgements}
I thank Dr. P. Sam Johnson, Department of Mathematical and Computational Sciences, National Institute of Technology Karnataka (NITK),  Surathkal for several discussions.


 \bibliographystyle{plain}
 \bibliography{reference.bib}

\end{document}